\title{}
\author{Mattia Cavicchi}
\address{Université Bourgogne Europe, CNRS, IMB UMR 5584, 21000 Dijon, France}
\email{mattia.cavicchi@ube.fr}
\keywords{Hecke algebras, motives of Shimura varieties, algebraic cycles}
\subjclass[2020]{Primary:14G35;14C15; Secondary:11F03;11G18;} 
\date{}
\newtheorem{thm}{Theorem}[section]
\newtheorem{prop}[thm]{Proposition}
\newtheorem{lm}[thm]{Lemma}
\newtheorem{cor}[thm]{Corollary}
\newtheorem{thmi}{Theorem}
\newtheorem{ques}{Question}
\theoremstyle{definition}
\newtheorem{dfi}[thm]{Definition}
\newtheorem{rem}[thm]{Remark}
\newtheorem{conv}{Convention}
\theoremstyle{definition}
\numberwithin{equation}{section}
\title[The motivic Hecke algebra for PEL Shimura varieties]{The motivic Hecke algebra for PEL Shimura varieties}
\DeclareRobustCommand*\cal{\@fontswitch\relax\mathcal}
\date{\today}
\DeclareMathOperator{\DM}{DM} 
\newcommand{\an}{\mathrm{an}}
\DeclareMathOperator{\Hom}{Hom}
\DeclareMathOperator{\End}{End}
\newcommand{\Spec}[1]{\operatorname{Spec}(#1)}
\DeclareMathOperator{\Der}{D}
\newcommand{\QQ} {\mathbb Q}
\newcommand{\CC} {\mathbb C}
\renewcommand{\AA} {\mathbb A}
\newcommand{\PP} {\mathbb P}
\newcommand{\un}{\mathbbm 1} 
\newcommand{\et}{\mathrm{\acute{e}t}}
\newcommand{\BA}{{\mathbb{A}}}
\newcommand{\BC}{{\mathbb{C}}}
\newcommand{\BL}{{\mathbb{L}}}
\newcommand{\BP}{{\mathbb{P}}}
\newcommand{\BQ}{{\mathbb{Q}}}
\newcommand{\BR}{{\mathbb{R}}}
\newcommand{\BZ}{{\mathbb{Z}}}
\newcommand{\Fh}{{\mathfrak{h}}}
\newcommand{\Fp}{{\mathfrak{p}}}
\newcommand{\CB}{{\cal B}}
\newcommand{\CN}{{\cal N}}
\newcommand{\CV}{{\cal V}}
\newcommand{\one}{\mathds{1}}
\newcommand{\Sym}{\mathop{\rm Sym}\nolimits}
\newcommand{\Rep}{\mathop{\rm Rep}\nolimits}
\newcommand{\Res}{\mathop{\rm Res}\nolimits}
\newcommand{\id}{\mathop{\rm id} \nolimits}
\newcommand{\VHS}{\mathop{\rm VHS} \nolimits}
\newcommand{\Et}{\mathop{\rm Et} \nolimits}
\newcommand{\CH}{\mathop{\rm CH}\nolimits}
\newcommand{\Id}{\mathop{\rm Id}\nolimits}
\newcommand{\GL}{\mathop{\rm GL} \nolimits}
\newcommand{\GSp}{\mathop{\rm GSp} \nolimits}
\newcommand{\GU}{\mathop{\rm GU} \nolimits}
\newcommand{\Gr}{\mathop{\rm Gr} \nolimits}
\newcommand{\op}{\mathop{\rm \tiny{op}} \nolimits}
\newcommand{\ab}{\mathop{\rm \tiny{ab}} \nolimits}
\newcommand{\q}[1]{``#1''}
\def\CHM{\mathop{\rm CHM}\nolimits}
\def\Hom{\mathop{\rm Hom}\nolimits}
\def\dim{\mathop{\rm dim}\nolimits}
\begin{document}

	\maketitle
	\begin{abstract}
We construct a motivic lift of the action of the Hecke algebra on the cohomology of PEL Shimura varieties $S_K$. To do so, when $S_K$ is associated with a reductive algebraic group $G$ and $V$ is a local system on $S_K$ coming from a $G$-representation, we define a motivic Hecke algebra $\mathcal{H}^M(G,K)$ as a natural sub-algebra of the endomorphism algebra, in the triangulated category of motives, of the constructible motive associated with $S_K$ and $V$. The algebra $\mathcal{H}^M(G,K)$ is such that realizations induce an epimorphism from it onto the classical Hecke algebra. We then consider Wildeshaus' theory of interior motives, along with the necessary hypotheses for it to be employed. Whenever those assumptions hold, one gets a Chow motive realizing to interior $V$-valued cohomology of $S_K$, equipped with an action of $\mathcal{H}^M(G,K)$ as an algebra of correspondences modulo rational equivalence. We give a list of known cases where this applies. 
		\end{abstract}
		
			\tableofcontents

	\newpage
\section{Introduction}

Ever since Scholl's definition of motives for modular forms \cite{Sch90}, one of the frustrating features of the available constructions of motives for automorphic forms is the fact that they are only able to produce \emph{homological} motives, i.e., very roughly speaking, direct factors of smooth projective varieties, cut out by algebraic correspondences which are idempotent modulo \emph{homological equivalence}. For arithmetic applications, and especially for the study of special values of $L$-functions according to Beilinson's conjectures \cite{Nek94}, one would rather dispose of \emph{Chow} motives, with \emph{rational} equivalence replacing the homological one\footnote{Notice for example that regulators are defined on spaces of higher cycles which are homologically trivial.}. The aim of this paper is to provide a hopefully useful tool to study these questions: namely, to construct, in some generality, an explicit algebra of correspondences modulo rational equivalence, where the hoped-for idempotents should live. 

Let us be more precise. When one replaces modular curves, seen as moduli spaces of elliptic curves with level structures, by moduli spaces of higher dimensional abelian varieties, equipped with a level structure as well as with a polarization of a given type and an action of some prescribed algebra of endomorphisms, one obtains higher dimensional Shimura varieties $S_K$ \emph{of PEL type}, defined over a number field $E$. They come associated with a reductive algebraic group $G$ over $\BQ$, whose finite-dimensional representations provide natural local systems $V$ on the complex variety $S_K(\BC)$. The singular cohomology spaces of the latter carry a natural action of algebras $\mathcal{H}(G,K)$ of Hecke operators of \emph{level} $K \subseteq G(\BA_f)$ - which we will always suppose to be \emph{small enough} for $S_K$ to be smooth. A similar action exists on the étale cohomology of the $\ell$-adic analogues of $V$. In order to study such actions, we have nowadays at our disposal not only the triangulated category $\DM_c(S)$ of \emph{constructible motives} with rational coefficients over general bases $S$, but also, because of the last 15 years of progresses in the motivic theory, objects lying in it which are relevant for our purposes. First, the six functor formalism à la Ayoub/Cisinski-Déglise for constructible motives (for which we use here \cite{CD19} as our reference), coupled with a theorem of Ancona \cite{Anc15}, provides objects $M(S_K,V)$ in $\DM_c(E)$, whose realizations coincide with the cohomology spaces $H^{\bullet}(S_K,V)$. Second, the relation of morphisms in $\DM_c(E)$ with $K$-theory tells us that the spaces $\End_{\DM_c(E)}(M(S_K,V))$ are the correct generalizations of Chow groups $\CH^*(X \times X)_{\BQ}$ of algebraic correspondences modulo rational equivalence on a smooth, projective variety $X$. 

With these notations, the main result of this paper is then the following. 

\begin{thmi}{(Theorem \ref{realhecke})} \label{thmA}
Let $S_K$ be a Shimura variety of PEL type, with underlying group $G$. For any representation $V$ of $G$, there exists a canonical sub-algebra 
\[
\mathcal{H}^M(G,K) \hookrightarrow \End_{\DM_c(E)}(M(S_K,V))
\]
such that realization induces an algebra epimorphism 
\[
\mathcal{H}^M(G,K) \twoheadrightarrow \mathcal{H}(G,K)
\]
\end{thmi}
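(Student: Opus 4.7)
The plan is to lift to $\DM_c(E)$ the classical realization of Hecke operators as finite étale correspondences between Shimura varieties, and then to verify that the motivic construction realizes correctly.

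\textbf{Classical picture.} For $g\in G(\BA_f)$, set $K_g := K\cap gKg^{-1}$. The double coset $KgK$ gives rise to a diagram of finite étale covers
$$S_K \xleftarrow{q_1} S_{K_g} \xrightarrow{q_2} S_K,$$
where $q_1$ is the forgetful projection and $q_2$ is its $g$-twist. The operator $T(KgK)$ on $H^\bullet(S_K,V)$ is $q_{2,*}\circ \theta_g\circ q_1^*$, where $\theta_g:q_1^*V\xrightarrow{\sim} q_2^*V$ is the canonical identification provided by the $G$-equivariance of the local system.

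\textbf{Motivic lift.} Every finite étale $q:Y\to X$ comes, in $\DM_c$, with canonical adjunctions $q^*\dashv q_*$ and $q_!=q_*\dashv q^!=q^*$ via \cite{CD19}, yielding in particular a unit $\eta_q:\one\to q_*q^*$ and a trace $\varepsilon_q:q_*q^*\to\one$. By Ancona's theorem there is a motivic sheaf $\mathcal{V}\in\DM_c(S_K)$ realizing to $V$, and pulling back $\mathcal{V}$ along $q_1$ or $q_2$ produces the Ancona sheaf $\mathcal{V}_{K_g}$ on $S_{K_g}$ attached to the same representation, up to a canonical isomorphism $\theta_g^M:q_1^*\mathcal{V}\xrightarrow{\sim} q_2^*\mathcal{V}$ coming from the equivariance of Ancona's functor under the automorphism of the PEL datum induced by right multiplication by $g$. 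Writing $\pi:S_K\to\Spec E$ and $\pi':=\pi\circ q_i$, the composition
$$T_g^M:=\pi_*(\varepsilon_{q_2})\circ\pi'_*(\theta_g^M)\circ\pi_*(\eta_{q_1})$$
lies in $\End_{\DM_c(E)}(M(S_K,V))$.

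\textbf{Algebra structure and realization.} The classical Hecke relations $(KgK)\cdot(KhK)=\sum c_{gh}^k(KkK)$ follow from the decomposition of the fibre product $S_{K_g}\times_{S_K}S_{K_h}$ together with proper base change and the projection formula. Since both are available in $\DM_c$, the analogous identity $T_g^M\circ T_h^M=\sum c_{gh}^k T_k^M$ holds in $\End_{\DM_c(E)}(M(S_K,V))$. Defining $\mathcal{H}^M(G,K)$ as the sub-$\BQ$-algebra they generate, any realization functor—commuting with the six-functor formalism and sending $\mathcal{V}$ to $V$—produces an algebra map $\mathcal{H}^M(G,K)\to\mathcal{H}(G,K)$ sending $T_g^M\mapsto T(KgK)$; surjectivity is immediate since the $T(KgK)$ generate $\mathcal{H}(G,K)$.

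\textbf{Main obstacle.} The delicate point is upgrading Ancona's construction so as to be equivariant for the automorphisms of the PEL datum induced by Hecke translations, in order to produce the motivic twist $\theta_g^M$ canonically and check its multiplicative behaviour in $g$. Once this is in place, the rest of the argument is essentially formal.
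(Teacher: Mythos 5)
Your outline correctly reproduces the shape of the argument — pull back and push forward along the finite étale correspondence $S_K \leftarrow S_{K_g} \rightarrow S_K$, insert a twisting morphism between the two pullbacks of the motivic sheaf, and take the subalgebra of $\End_{\DM_c(E)}(M(S_K,V))$ generated by the resulting endomorphisms. But the substance of the theorem is precisely the part you name as the ``main obstacle'' and then leave unresolved: the construction of the motivic twist $\theta_g^M : [\cdot 1]^* \tilde\mu(V) \to [\cdot g]^* \tilde\mu(V)$. Appealing to ``equivariance of Ancona's functor under the automorphism of the PEL datum induced by right multiplication by $g$'' is not a proof; Ancona's canonical construction is a functor $\Rep(G_F) \to \CHM^s(S_K)_F$ at fixed level $K$ and carries no equivariance datum for varying $K$ or for the two maps $[\cdot 1]$, $[\cdot g]$ out of the box. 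The paper does something genuinely different: it does not try to upgrade Ancona's functor, but instead constructs $\phi_g$ \emph{explicitly} on the level of the universal abelian schemes, as the correspondence $\Gamma_{f_g} \circ {}^t\Gamma_{f_1}$ arising from the isogenies $f_1 : A_{K_g} \to A_{K,1}$ and $f_g : A_{K_g} \to A_{K,g}$, and then proves (Proposition~\ref{phi_g^e}) that this morphism commutes with every idempotent in Ancona's algebra $\CB_{i,F}$ — by checking commutation with the symmetric group action, the $B$-algebra action, and (the delicate part, occupying most of the proof and requiring Lemma~\ref{adjformulae} on adjunction formulae plus the compatibility of $\theta_g$ with the polarization pairing from Lemma~\ref{comp_pairing}) the polarization projector $P$. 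This is what produces $\phi_g$ canonically on $\tilde\mu(V)$ for an arbitrary representation $V$ (Corollary~\ref{phi_V}), and it is the technical core of the paper.

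A second, smaller problem: your assertion that the motivic operators $T_g^M$ satisfy the classical Hecke algebra relations $T_g^M \circ T_h^M = \sum c_{gh}^k T_k^M$ ``since proper base change and the projection formula are available in $\DM_c$'' is unjustified, and the paper does not make it. Indeed, the paper explicitly flags as an open question whether the homomorphism $\mathcal{C}_c^\infty(G,K) \to \mathcal{H}(G,K)$ sending $\one_{KgK} \mapsto T_{K,g}$ factors through $\mathcal{H}^M(G,K)$ — which is precisely the question of whether these relations lift motivically. The theorem does not need them: $\mathcal{H}^M(G,K)$ is simply \emph{defined} (Definition~\ref{motheckealg}) as the subalgebra generated by the $KgK$, and surjectivity of $\rho : \mathcal{H}^M(G,K) \to \mathcal{H}(G,K)$ only requires that each generator $KgK$ realize to $T_{K,g}$ (which follows from Corollary~\ref{phi_V} and compatibility of realization with the six functors) and that $\rho$ be a ring map. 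Your overclaim is harmless to the conclusion but should be withdrawn, since it is not established and appears to be genuinely open.
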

In the above statement, one can fix ideas by taking as realization the \emph{Betti realization}. The algebra $\mathcal{H}^M(G,K)$ (the \emph{motivic Hecke algebra} of the title) is then a lift of the classical Hecke algebra, acting on singular cohomology $H^{\bullet}(S_K(\BC), V)$, to an algebra acting on motives. Some special cases of this theorem were known (in particular, in the case of modular curves and Hilbert modular varieties, by \cite{Wil12}), but the construction is already new when $S_K$ is a Siegel threefold, i.e. a moduli space of polarized abelian surfaces, corresponding to $G=\GSp_4$.

In order to explain the relation of this theorem with the problems mentioned at the beginning, let us now describe another instance of recent motivic advances. In a series of papers culminating with \cite{Wil19}, Wildeshaus has developed criteria allowing to construct motives for automorphic forms. Starting from \cite{Wil12}, those criteria have been shown to hold in a number of new cases beyond modular curves, thus making the first progress on these questions since Scholl's work  - the most recent one being due to the present author \cite{Cav19} and corresponding to PEL Shimura varieties for $G=\Res_{F \vert \BQ} \GSp_{4,F}$, for $F$ a totally real number field. Given a PEL Shimura variety $S_K$ and a system of coefficients $V$, Wildeshaus' key idea is to exploit Bondarko's theory of \emph{weight structures} \cite{Bon10} to functorially extract, under some conditions which will be made explicit in the main text (see Thm. \ref{intmotive_thm}), a \q{lowest weight-graded quotient} $\Gr_0 M(S_K,V)$ of the motive $M(S_K,V)$. By construction, the object $\Gr_0 M(S_K,V)$ is then an object of the full subcategory $\CHM(E) \hookrightarrow \DM_c(E)$ of \emph{Chow motives}; it is called \emph{interior motive} since its realizations coincide with \emph{interior cohomology}
\[
H^{\bullet}_!(S_K,V):=\mbox{Im}(H_c^{\bullet}(S_K,V) \rightarrow H^{\bullet}(S_K,V))
\]
The Hecke algebra still acts on interior cohomology, and we can consider its image, denoted by $\mathcal{H}_!(G,K)$, in the endomorphisms of $H^{\bullet}_!(S_K,V)$. The functoriality properties of $\Gr_0 M(S_K,V)$ allow us to consider the image $\mathcal{H}_!^M(G,K)$ of $\mathcal{H}^M(G,K)$ in its endomorphism algebra, and to deduce the following directly from Theorem \ref{thmA}.
\begin{thmi}{(Theorem \ref{motheckeaction})} \label{thmB}
Let $S_K$ be a Shimura variety of PEL type, of underlying group $G$, and suppose that the representation $V$ of $G$ is such that the interior motive $\Gr_0 M(S_K,V)$ is defined. Then, there exists a canonical sub-algebra 
\[
\mathcal{H}_!^M(G,K) \hookrightarrow \End_{\CHM(E)}(\Gr_0 M(S_K,V))
\]
such that realization induces an algebra epimorphism 
\[
\mathcal{H}_!^M(G,K) \twoheadrightarrow \mathcal{H}_!(G,K)
\]
\end{thmi}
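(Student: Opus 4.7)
The plan is to transport the construction of Theorem \ref{thmA} through Wildeshaus' interior motive functor $\Gr_0$, whose functorial properties do all the work. By the construction recalled in Thm. \ref{intmotive_thm}, the interior motive $\Gr_0 M(S_K,V)$ is extracted from $M(S_K,V)$ by a canonical weight-truncation procedure which is functorial in $M(S_K,V)$. Consequently, $\Gr_0$ induces a ring homomorphism
\[
\End_{\DM_c(E)}(M(S_K,V)) \longrightarrow \End_{\CHM(E)}(\Gr_0 M(S_K,V)).
\]

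First, I would precompose this with the embedding $\mathcal{H}^M(G,K) \hookrightarrow \End_{\DM_c(E)}(M(S_K,V))$ from Theorem \ref{thmA}, and define the sought-after sub-algebra to be the image of $\mathcal{H}^M(G,K)$ in $\End_{\CHM(E)}(\Gr_0 M(S_K,V))$, which one denotes again by $\mathcal{H}^M(G,K)$ by abuse of notation. Second, I would use the compatibility of $\Gr_0$ with realization: the realization of $\Gr_0 M(S_K,V)$ is $H^{\bullet}_!(S_K,V)$, and for every $f \in \End_{\DM_c(E)}(M(S_K,V))$ the realization of $\Gr_0(f)$ is the endomorphism induced on $H^{\bullet}_!$ by the realization of $f$ through the canonical map $H_c^{\bullet} \twoheadrightarrow H^{\bullet}_!$. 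The desired algebra epimorphism onto $\mathcal{H}(G,K) \subseteq \End(H^{\bullet}_!(S_K,V))$ then follows immediately from Theorem \ref{thmA}: since the motivic Hecke algebra surjects onto the Hecke algebra acting on full cohomology, and the latter surjects onto its image in the endomorphisms of interior cohomology by definition, the composed surjection factors through the image under $\Gr_0$.

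The main obstacle, modest as it is, lies in spelling out the functoriality of the interior motive construction at the level of endomorphism algebras — i.e. in verifying that $\Gr_0$ sends identity to identity and respects composition of endomorphisms. In Wildeshaus' framework this stems from the uniqueness (up to a canonical isomorphism) of the weight truncation and from the realization of $\Gr_0 M(S_K,V)$ as a direct factor of a Chow motive, both of which are internal to the formalism recalled in Thm. \ref{intmotive_thm}. Once these functoriality properties are articulated, the statement of Theorem \ref{thmB} reduces to a short diagram chase combining them with Theorem \ref{thmA}.
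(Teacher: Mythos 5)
Your proposal is correct and follows essentially the same route as the paper, whose proof is literally the single line ``combine Theorem \ref{realhecke} with Thm. \ref{intmotive_thm}.'' You have simply unpacked that combination: the functoriality statement in Theorem \ref{intmotive_thm}(1) is precisely what makes $\Gr_0$ a ring homomorphism on endomorphisms under the weight-avoidance hypothesis (the paper emphasizes in the remark following Theorem \ref{motheckeaction} that this functoriality, hence the algebra structure, fails without it), and Theorem \ref{intmotive_thm}(2) provides the identification of the realization of $\Gr_0 s_* \tilde{\mu}(V)$ with interior cohomology that, together with Remark \ref{heckeintaction}, yields the epimorphism onto $\mathcal{H}(G,K)$.
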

Let us stress that now, the algebra $\mathcal{H}_!^M(G,K)$ of the above theorem can be identified with a subalgebra of an algebra $\mathcal{C}$ of \emph{correspondences modulo rational equivalence}; i.e., some $\mathcal{C}$ arising as the image of $\CH^*(X \times X)_{\BQ}$ under \emph{some} idempotent element $p$, for \emph{some} smooth projective variety $X$ - the power of the method lying in the fact that the existence of $X$ and $p$ is granted without having to construct explicitly neither of them! Let us also observe that one very interesting case in which Theorem \ref{thmB} applies is the above-mentioned one of $G=\Res_{F \vert \BQ} \GSp_{4,F}$, when the coefficient system $V$ is \emph{regular}, because then, by the main result of \cite{Cav19}, the weight hypotheses under which the interior motive exists are satisfied.

We conclude by recalling that $H^{\bullet}_!(S_K,V)$ decomposes as a direct sum of simple $\mathcal{H}(G,K)$-modules, among which there are modules $\pi$ corresponding to \emph{cuspidal} cohomological automorphic representations of $G$. The associated idempotent elements $e(\pi)$ in $\mathcal{H}(G,K)$ cut out direct factors of the \emph{homological} motive underlying $\Gr_0 M(S_K,V)$, providing motives associated to those automorphic representations. Our results say nothing about the difficult question of whether the $e(\pi)$'s lift to \emph{idempotent} endomorphisms of $\Gr_0(M(S_K,V))$ - which would yield \emph{Chow} motives for automorphic forms - but they exhibit at least a natural, explicitly defined sub-algebra $\mathcal{H}^M(G,K)$ of endomorphisms containing \emph{some} lift of $e(\pi)$. We felt that this was a necessary step for all further investigations on these matters, and we expect the study of $\mathcal{H}^M(G,K)$, which we plan to pursue, to be interesting and helpful. 

\subsection{Plan of the paper}

Section \ref{chowbeilrev} serves as a reference for all the rest of the paper: it collects all we need to know and to use about constructible motives over a base and about the hearts of the weight structures on them, i.e. relative Chow motives. Special attention is paid to \emph{duality} on \emph{smooth} relative Chow motives, since some finer properties of the rigid tensor structure on those are needed later, for the proof of Proposition \ref{phi_g^e} and hence of our main result. Section \ref{motab} recalls the fundamental theorems on motives of abelian schemes due to Deninger-Murre, K\"{u}nnemann and Kings, which then intervene crucially in Section \ref{motcanconstr}: there, we gather basic facts and notations for PEL Shimura varieties $S_K$ with underlying group $G$, and then we pass to describe Ancona's construction of the motivic lifts of of sheaves on $S_K$ corresponding to representations of $G$, which makes use of certain algebras $\mathcal{B}_i$ of relative cycles whose definition depends vitally on the results reviewed in Section \ref{motab}. In Section \ref{hecke}, we first recall the classical definition of Hecke algebras acting on the cohomology of Shimura varieties and give some of their properties; then, we arrive at the heart of the matter, i.e. the construction of the motivic Hecke algebra $\mathcal{H}^M(G,K)$. The key technical result is Proposition \ref{phi_g^e}, saying that when $A_K$ is the universal abelian scheme over the PEL Shimura variety $S_K$, direct factors of $h^1(A_K)^{\otimes i}$ cut out by idempotents in Ancona's algebra $\mathcal{B}_i$ are respected by a certain morphism, which lifts the morphism, entering in the definition of classical Hecke operators, to motives. From this, we construct $\mathcal{H}^M(G,K)$ (Definition \ref{motheckealg}) and we deduce Theorem \ref{thmA} (Theorem \ref{realhecke} in the text). Finally, Section \ref{hecke_interior} first reviews Wildeshaus' theory of interior motives and explains how to deduce Theorem \ref{thmB} (Theorem \ref{motheckeaction} in the text) from Theorem \ref{thmA}, then provides the list of currently known cases in which Theorem \ref{thmB} applies (Theorem \ref{knowncases}).

\subsection{Relation with past work and future directions} The relation between our work and Wildeshaus' definition of Hecke endomorphisms on intersection motives of PEL Shimura varieties (\cite{Wil17}) - a definition which does not \emph{a priori} produce a motivic \emph{algebra} action -  is discussed in Remark \ref{morehecke}.\ref{differences} and in Remark \ref{intersection_motive}. To explain instead the possible continuations of this work, let us note that the classical Hecke algebra $\mathcal{H}(G,K)$ acting on cohomology is the image of a Hecke algebra $\mathcal{C}^{\infty}_c(G,K)$ of bi-invariant functions through a natural algebra homomorphism \eqref{hecke_arrow}. It is not clear to us whether this homomorphism factors through our motivic Hecke algebra $\mathcal{H}^M(G,K)$; we find this question stimulating, because a positive answer could help in making more explicit the composition law of $\mathcal{H}^M(G,K)$, hence shedding light on the structure of our algebra, even in cases where its construction was already known: most notably, in the case of classical modular curves. In particular, this could provide a further step towards solving the idempotent lifting problem, that was our starting motivation. 

\subsection{Notations and conventions} Throughout the paper, we will say \emph{base scheme} to mean a quasi-projective scheme $S$ over a characteristic zero field $E$, with algebraic closure $\bar{E}$. We will denote the complex analytic space associated to such a $S$ (under the choice of a complex embedding of $E$) by $S^{\an}$. 

If $X$ is a variety  (i.e., a reduced, separated finite type scheme over a field) which is moreover smooth and projective, then $\CH^{i}(X)$ stands for the Chow group \emph{with rational coefficients} of algebraic cycles of codimension $i$ on $X$. When $f:X \rightarrow Y$ is a morphism of smooth projective varieties, $\Gamma_f$ will denote both its graph and the class of the latter in $\CH^*(X \times Y)$. Under these hypotheses, the notation $^t Z$ for an algebraic cycle $Z$ in $X \times Y$ will denote (the class of) its \emph{transpose}, i.e. the pullback of $Z$ under the exchange of factors in $X \times Y$. 

When $S$ is a variety over $E$, the notation $H^{\bullet}(S, A)$ will be employed both for the singular cohomology spaces of $S^{\an}$ with values in some sheaf $A$ of vector spaces for the analytic topology and for the étale $\ell$-adic cohomology spaces of $S_{\bar{E}}$ with values in an étale $\ell$-adic sheaf $A$ (for some choice of prime $\ell$). Same conventions for cohomology with compact support $H^{\bullet}_c(S,A)$. We will use freely the six functor formalism available for the categories of sheaves just mentioned. If $f:X \rightarrow S$ is a morphism from a variety to a base scheme, then $H^i_S(X)$ will mean the $i$-th cohomology sheaf of $Rf_* \one_X$, with $\one_X$ being the constant sheaf, either in the analytic or in the étale $\ell$-adic case; we will even write $H^i(X)$ if the base is understood and the context makes it clear that we are considering the relative setting. The category of \emph{lisse} étale $\ell$-adic sheaves over $S$ will be denoted by $\Et_{\ell}(S)$, whereas $\VHS(S)$ will denote the category of polarizable (semisimple) variations of Hodge structure over $S^{\an}$. 

If $K$ is a subgroup of a group $G$, and $g \in G$, we will denote $K^g:=K \cap gKg^{-1}$. If $G$ is an algebraic group over $\BQ$, and $F$ a characteristic zero field, then $\Rep G_F$ will denote the category of algebraic representations of $G_F$ in finite-dimensional vector spaces over $F$. 

\section{Review of constructible motives and relative Chow motives} \label{chowbeilrev}

\subsection{Constructible motives over a base} For each base scheme $S$, we consider the triangulated categories $\DM_c(S,\QQ)$ of constructible motives over $S$, with rational coefficients. Various models for these categories exist: the reader can keep in mind the ones provided by \emph{constructible Beilinson motives} (\cite[Def. 15.1.1]{CD19}) or equivalently (\cite[Thm. 16.2.22]{CD19}) by the compact objects in the $\PP^1$-stable $\AA^1$-derived \'etale category. 

These categories are pseudo-abelian and symmetric monoidal, with tensor product denoted by $\otimes$ and unit given by the constant motive $\un_S$ over $S$. They satisfy the \emph{six functor formalism} as defined in \cite[A.5.1]{CD19}. In particular, one has \emph{cohomological motives} over $S$: given any morphism $f:X \rightarrow S$, we write:
$$
h_S(X):=f_*(\un_X).
$$
The motive $h_S(\BP^1_S)$ is canonically a direct sum of $\one_S$ and of a $\otimes$-invertible object $\BL_S$ called \emph{Lefschetz motive}, with tensor inverse denoted by $\one(1)$ and called \emph{Tate motive}. We call $i$-th \emph{Tate twist} the operation of tensoring a motive $M$ by an integer power $\one_S(i):=\one_S(1)^{\otimes i}$, with resulting object denoted by $M(i)$. 

Whenever the base $S$ is clear from the context, we will drop the subscript $S$ and write $h(X)$, $\mathbb{L}$, $\one(i)$. 

\subsection{Chow motives over a base} By \cite[Thm. 3.3, Thm. 3.8 (i)-(ii)]{Héb11}, the category $\DM_c(S)$ is endowed with a canonical weight structure (in the sense of \cite[Def. 1.1.1]{Bon10}), called the \emph{motivic weight structure}, well-behaved with respect to the six functors, whose heart, called the category $\CHM(S)$ of \emph{Chow motives over} $S$, is generated (in the pseudo-abelian sense) by the motives $h_S(X)(p)$, with $X \rightarrow S$ proper with regular source, and $p \in \BZ$. By the main theorem of \cite{Fan16}, $\CHM(S)$ is canonically equivalent to the category of relative Chow motives introduced by Corti and Hanamura in \cite{CH00}, thus justifying its name. 

When $S$ is regular, we can in particular consider the full subcategory
\[
\CHM^s(S) \hookrightarrow \CHM(S)
\]
of \emph{smooth} Chow motives over $S$, generated by the $h_S(X)(p)$ as above but with $X \rightarrow S$ in addition \emph{smooth}. It is the same category defined in \cite[1.6]{DM91}. Spaces of morphisms between motives $h_S(X), h_S(Y)$ in $\CHM^s(S)$, with $X$ connected of relative dimension $d$ over $S$, verify then 
\begin{equation} \label{beilmorph}
\Hom_{\DM_c(S)}(h_S(X), h_S(Y)) \simeq \CH^{d}(X \times_{S} Y)
\end{equation}
(compatibly with composition). For $f : X \rightarrow Y$ a morphism of smooth projective schemes over $S$, the corresponding morphism $f^* : h_S(Y) \rightarrow h_S(X)$ in $\DM_c(S)$ is given, under \eqref{beilmorph}, by $^t \Gamma_f$. 

\subsection{Duality for smooth Chow motives} \label{duality} Let $S$ be regular. The category $\CHM^s(S)$ is a linear, pseudo-abelian, \emph{rigid} symmetric monoidal category, and we will denote by $M^{\vee}$ the dual of an object $M$ in $\CHM^s(S)$. By definition, there are \emph{evaluation} and \emph{coevaluation} morphisms
\[
\epsilon_M : M \otimes M^{\vee} \rightarrow \one_S, \ \ \ \eta_M : \one_S \rightarrow M^{\vee} \otimes M
\]
such that 
\[
(\epsilon_M \otimes \id_M) \circ (\id_M \otimes \eta_M) = \id_M, \ \ \ (\id_{M^{\vee}} \otimes \epsilon_M) \circ (\eta_M \otimes \id_{M^{\vee}}) = \id_{M^{\vee}}
\]
Then, the adjunction isomorphism 
\begin{equation} \label{adj}
adj: \Hom_{\CHM^s(S)}(A \otimes B, C) \simeq \Hom_{\CHM^s(S)}(B, A^{\vee} \otimes C)
\end{equation}
is defined by sending $f : A \otimes B \rightarrow C$ to 
\[
B \xrightarrow[\eta_A \otimes \id_B] \ A^{\vee} \otimes A \otimes B \xrightarrow[\id_{A^{\vee}} \otimes f] \ A^{\vee} \otimes C
\]
with inverse $adj^{-1}$ sending $g : B \rightarrow A^{\vee} \otimes C$ to 
\[
A \otimes B \xrightarrow[\id_A \otimes g] \ A \otimes A^{\vee} \otimes C \xrightarrow[\epsilon_A \otimes \id_C] \ C
\]
Moreover, given $f : A \rightarrow B$ in $\CHM^s(S)$, the dual map $f^{\vee}:B^{\vee} \rightarrow A^{\vee}$ is defined as the composition
\[
B^{\vee} \xrightarrow[\eta_A \otimes \id_{B^{\vee}}] \ A^{\vee} \otimes A \otimes B^{\vee} \xrightarrow[\id_{A^{\vee}} \otimes f \otimes \id_{B^{\vee}}] \ A^{\vee} \otimes B \otimes B^{\vee} \xrightarrow[id_{A^{\vee}} \otimes \epsilon_B] \ A^{\vee}
\]
An useful reference for the above material is \cite[6.1]{AK02}). 

Then, one checks that the previous definitions imply the following formulae.
\begin{lm} \label{adjformulae}
Let $A, B, C, D$ be objects in $\mathcal{C}:=\CHM^s(S)$. 
\begin{enumerate}[wide, labelwidth=!, labelindent=0pt, label=(\arabic*)]
\item For any $\lambda \in \Hom_{\mathcal{C}}(A \otimes B, C)$ and $\mu \in \Hom_{\mathcal{C}}(C,D)$, we have
\[
adj(\mu \circ \lambda)=(\id_{A^{\vee}} \otimes \mu) \circ adj(\lambda)
\]
\label{f1}
\item For any $\lambda \in \Hom_{\mathcal{C}}(A, B)$ and $\mu \in \Hom_{\mathcal{C}}(B \otimes C,D)$, we have
\[
adj(\mu \circ (\lambda \otimes \id_C))=(\lambda^{\vee} \otimes \id_D) \circ adj(\mu)
\]
\label{f2}
\item For any $\lambda \in \Hom_{\mathcal{C}}(A, B)$ and $\mu \in \Hom_{\mathcal{C}}(B,C \otimes D)$, we have
\[
adj^{-1}(\mu \circ \lambda) = adj^{-1}(\mu) \circ (\id_{C^{\vee}} \otimes \lambda)
\]
\label{f3}
\item For any $\lambda \in \Hom_{\mathcal{C}}(A, B \otimes D)$ and $\mu \in \Hom_{\mathcal{C}}(B,C)$, we have
\[
adj^{-1} ((\mu \otimes \id_D) \circ \lambda) = adj^{-1} (\lambda) \circ (\mu^{\vee} \otimes \id_A)
\]
\label{f4}
\end{enumerate}
\end{lm}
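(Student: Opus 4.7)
The plan is to verify each of the four formulae by unfolding $adj$, $adj^{-1}$ and $(-)^{\vee}$ according to the definitions recalled in \ref{duality}, and then using bifunctoriality of $\otimes$ together with the triangle identities. Nothing in the argument uses the specific structure of $\CHM^s(S)$; the identities already hold in any rigid symmetric monoidal category, so the proof is entirely formal once the various tensor factors are kept straight.

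Parts (1) and (3) should be immediate. For (1), substituting $\mu \circ \lambda$ into the definition of $adj$ and distributing via bifunctoriality gives
\[
adj(\mu \circ \lambda) = (\id_{A^{\vee}} \otimes (\mu \circ \lambda)) \circ (\eta_A \otimes \id_B) = (\id_{A^{\vee}} \otimes \mu) \circ (\id_{A^{\vee}} \otimes \lambda) \circ (\eta_A \otimes \id_B),
\]
and the last two factors compose to $(\id_{A^{\vee}} \otimes \mu) \circ adj(\lambda)$. Part (3) follows by the dual manipulation, starting from $adj^{-1}(g) = (\epsilon \otimes \id) \circ (\id \otimes g)$ and distributing the composition $\mu \circ \lambda$ inside the $\id \otimes (-)$ factor.

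For (2) and (4), the analogous unfolding does not close up on the nose: (2) reduces, after bifunctoriality, to the naturality statement
\[
(\id_{A^{\vee}} \otimes \lambda) \circ \eta_A = (\lambda^{\vee} \otimes \id_B) \circ \eta_B \quad \text{in } \Hom_{\mathcal{C}}(\one_S, A^{\vee} \otimes B),
\]
and (4) reduces analogously to the dual identity $\epsilon_{C^{\vee}} \circ (\id_{C^{\vee}} \otimes \mu) = \epsilon_{B^{\vee}} \circ (\mu^{\vee} \otimes \id_B)$ in $\Hom_{\mathcal{C}}(C^{\vee} \otimes B, \one_S)$. To prove the former I would substitute for $\lambda^{\vee}$ its defining expression from \ref{duality}, rewrite $(\eta_A \otimes \id_{B^{\vee} \otimes B}) \circ \eta_B$ as $\eta_A \otimes \eta_B$ via bifunctoriality, and then apply the triangle identity $(\epsilon_B \otimes \id_B) \circ (\id_B \otimes \eta_B) = \id_B$ to collapse the interior $B \otimes B^{\vee} \otimes B$ down to a single $B$, leaving precisely $(\id_{A^{\vee}} \otimes \lambda) \circ \eta_A$. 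The identity needed for (4) is proved symmetrically, using the other triangle identity $(\id_{A^{\vee}} \otimes \epsilon_A) \circ (\eta_A \otimes \id_{A^{\vee}}) = \id_{A^{\vee}}$.

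The only real obstacle is bookkeeping: each reduction involves several tensor factors, and the associativity and symmetry coherences must be invoked in a consistent order. I would organize the argument as four short commutative diagrams, with every square justified either by bifunctoriality of $\otimes$ or by a single instance of a triangle identity, which makes the verification essentially mechanical.
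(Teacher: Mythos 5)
Your proposal is correct. The paper itself offers no proof — it simply asserts the lemma with the remark that ``one checks that the previous definitions imply the following formulae'' — and your argument supplies exactly the omitted verification: parts \ref{f1} and \ref{f3} do reduce to bifunctoriality of $\otimes$ alone, while parts \ref{f2} and \ref{f4} reduce, as you say, to the dinaturality identities $(\id_{A^{\vee}} \otimes \lambda) \circ \eta_A = (\lambda^{\vee} \otimes \id_B) \circ \eta_B$ and its dual, each of which follows from the definition of $(-)^\vee$, the interchange law, and one triangle identity. The only implicit step you might flag explicitly is the biduality identification $(C^\vee)^\vee \simeq C$ needed to make sense of applying $adj^{-1}$ in parts \ref{f3} and \ref{f4}, but this is standard in a rigid symmetric monoidal category and is already tacit in the statement of the lemma.
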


\subsection{Coefficients}
The construction of $\DM_c(S)$ can be done taking an arbitrary $\BQ$-algebra $F$ as ring of coefficients instead of $\BQ$ (\cite[14.2.20]{CD19}), yielding triangulated, $F$-linear categories $\DM_c(S)_F$, such that the canonical functor $\DM_c(S) \otimes_{\BQ} F \rightarrow \DM_c(S)_F$ is fully faithful, and satisfying the $F$-linear analogues of the properties of $\DM_c(S)$. In particular, such categories are again pseudo-Abelian (\cite[Sect. 2.10]{Héb11}). The same then holds for the full subcategories $\CHM(S)_F$ and $\CHM^s(S)_F$, defined in the same way as in the case $F=\BQ$. All the properties stated in Subsection \ref{duality} hold, \emph{mutatis mutandis}, in $\CHM^s(S)_F$. 

\subsection{Realizations}

Given our assumptions on base schemes, we have at our disposal the following triangulated realization functors, commuting with the six functors: 
\begin{itemize}
\item for each complex embedding $\sigma$ of the base field $E$, the Betti realization:
$$
\rho_B:\DM_c(S,\QQ) \rightarrow \Der^b_c(S^\an,\QQ)
$$
whose target is the constructible derived category
 of rational sheaves over the analytic site of $S^\an=S^\sigma(\CC)$ (see \cite[end of Sec. 1]{CDN22} for a quick review of the definition of this functor);
 \item for each prime $\ell$, the $\ell$-adic realization:
$$
\rho_\ell:\DM_c(S,\QQ) \rightarrow \Der^b_c(S_\et,\QQ_\ell)
$$
whose target is the constructible derived category
 of Ekedahl's \'etale $\BQ_{\ell}$-sheaves
 (see \cite[7.2.24]{CD16}).
\end{itemize}
Actually, we will abuse of notation and denote by $\rho_B$ and $\rho_{\ell}$ the \emph{cohomological realization} functors, i.e. the composition of one of the above functors with the functor \q{direct sum of cohomology objects}
\begin{align*}
\Der^b_c(\mathcal{C}) & \rightarrow \mathcal{C} \\
K & \mapsto \oplus_i H^i(K)
\end{align*}
For a regular base $S$, and $\rho$ equal to any cohomological realization, the restriction of $\rho$ to the full subcategory $\CHM^s(S)$ admits a factorization, denoted in the same way, through the fully faithful embedding of a certain abelian subcategory $\mathcal{A}$ : 
\begin{itemize}
\item for $\rho=\rho_B$, one takes $\mathcal{A}$ equal to the category of $\BQ$-local systems over the analytic site of $S^{\an}$ ;
\item for $\rho=\rho_{\ell}$, one takes $\mathcal{A}$ equal to the category $\Et_{\ell}(S)$.
\end{itemize}
Moreover, $\rho_B$ has a factorization $\rho_H$, called the \emph{Hodge realization}, through the forgetful functor from $\VHS(S)$ towards $\BQ$-local systems. 

When speaking of \emph{realizations}, we will mean any one of the above functors (the cohomological ones with triangulated source, or, if the context permits, the above-described factorizations of their restrictions to smooth Chow motives). 

For any $\BQ$-algebra $F$ of coefficients, there are $F$-linear analogues of all these functors, between the $F$-linear versions of the relevant categories.

\section{Motives of abelian schemes} \label{motab}

In this section, $A \rightarrow S$ will denote an abelian scheme over a base scheme $S$, of relative dimension $d$. We will write $\End(A)$ for its algebra of endomorphisms (as an abelian scheme), tensored with $\BQ$, and $n$ for the endomorphism of multiplication by an integer $n$. 

In the complex-analytic setting, the $i$-th cohomology sheaf $H^i(A)$ is endowed with the structure of a polarizable variation of Hodge structure over $S^{\an}$, of pure weight $i$. Then, recall the following. 

\begin{prop}{(\cite[4.4.3]{Del71b})} \label{abelhodge}
Fix an embedding $E \hookrightarrow \BC$. The functor $H^1$ induces an (anti-)equivalence of categories 
\[
\left\{ \begin{array}{cc}
\mbox{abelian schemes over} \ S_{\BC} \\
\mbox{modulo isogeny} 
\end{array} 
\right\}
\simeq \left\{ \begin{array}{cc}
\mbox{polarizable variations of} \ \BQ \mbox{-Hodge structure} \\
\mbox{over} \ S^{\an} \ \mbox{of type} \ (1,0), (0,1) 
\end{array} 
\right\}
\]
\end{prop}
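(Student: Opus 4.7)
The plan is to bootstrap from the classical pointwise Riemann correspondence (abelian varieties over $\BC$, up to isogeny, are anti-equivalent to polarizable $\BQ$-Hodge structures of type $(1,0),(0,1)$) to a statement about families. I first verify that $H^1$ lands in the target category: for $f : A \to S_\BC$ an abelian scheme of relative dimension $d$, the local system $R^1 f^{\an}_\ast \BQ$ underlies a polarizable variation of pure weight $1$, whose Hodge filtration has only $F^0 \supseteq F^1 \supseteq 0$ by relative dimension considerations, hence is of the required Hodge type; and a polarization of $A/S_\BC$ induces a polarization of the VHS $H^1(A)$ in the standard way.

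For faithfulness and fullness, I would argue fiberwise. Faithfulness is immediate: a map $\phi: A_1 \to A_2$ of abelian schemes over $S_\BC$ is determined up to isogeny by its fiberwise action, hence by $H^1(\phi)$ pointwise, which is controlled by the induced morphism of VHS. For fullness, given $\psi : H^1(A_1) \to H^1(A_2)$ a morphism of VHS, the pointwise Riemann statement supplies, at each point $s \in S(\BC)$, a unique $\phi_s \in \Hom(A_{2,s}, A_{1,s})_\BQ$ inducing $\psi_s$ (direction switched because the functor is contravariant). Compatibility of $\psi$ with the Hodge filtrations means the induced map between the quotients $H^1_\CO/F^1$ is holomorphic; this promotes the family $\{\phi_s\}$ to a holomorphic map of the associated relative complex tori, which by relative GAGA (applied after clearing denominators to reduce to the algebraic isogeny setting) is algebraic, giving the sought morphism in the isogeny category.

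For essential surjectivity, let $\CV$ be a polarizable $\BQ$-VHS over $S^{\an}$ of type $(1,0),(0,1)$. Up to isogeny I may choose a $\BZ$-lattice $\CV_\BZ \subset \CV$. Form the holomorphic vector bundle $\CV_\CO/F^1\CV_\CO$ on $S^{\an}$. The Hodge type condition ensures that, fiberwise, the composite $\CV_{\BZ,s} \hookrightarrow \CV_{\CC,s} \twoheadrightarrow (\CV_\CO/F^1\CV_\CO)_s$ is injective with discrete cocompact image, so the quotient $A^{\an} := (\CV_\CO/F^1\CV_\CO)/\CV_\BZ$ is an analytic family of complex tori over $S^{\an}$ with $R^1 p^{\an}_\ast \BQ \simeq \CV$. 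A polarization of $\CV$ gives a Riemann form on each fiber, hence a relatively ample line bundle on $A^{\an}$; by the relative GAGA/algebraization theorem for families of polarized complex tori over a quasi-projective base $S_\BC$ (see \cite{Del71b}), $A^{\an}$ is the analytification of a unique abelian scheme $A/S_\BC$, and by construction $H^1(A) \simeq \CV$.

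The main obstacle is the algebraization step in essential surjectivity: passing from a polarized analytic family of complex tori over a quasi-projective base to an honest abelian scheme. For projective $S$ this is classical relative GAGA applied to the line bundle supplied by the Riemann form; for a general quasi-projective $S_\BC$ one compactifies, extends (using the polarization to control degenerations at the boundary) and restricts back, or invokes the relative moduli-theoretic formulation directly. All of this is carried out in \cite[4.4.3]{Del71b}, which I would cite rather than reproduce in detail.
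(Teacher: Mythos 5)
The paper offers no proof of this proposition: it is quoted verbatim from Deligne \cite[4.4.3]{Del71b} and used as a black box. So there is no ``paper's own argument'' to compare against; what you have written is a reconstruction of the standard proof, and it is essentially correct in outline and in spirit (well-definedness, fiberwise full faithfulness via the pointwise Riemann correspondence, essential surjectivity by forming the relative Jacobian-type torus and algebraizing via GAGA / the moduli-space period map). You also correctly identify the algebraization over a quasi-projective base as the genuinely nontrivial step and delegate it to Deligne, which is exactly what the paper does.

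One small point worth flagging in your essential-surjectivity step. The torus $T := (\CV_\CO/F^1\CV_\CO)/\CV_\BZ$ has $H_1(T,\BZ) \cong \CV_\BZ$, and hence $H^1(T) \cong \CV^{\vee}(-1)$ as a variation of Hodge structure, \emph{not} $\CV$ on the nose. The identification $\CV \simeq \CV^{\vee}(-1)$ is supplied precisely by a choice of polarization of $\CV$, and since we only work in the isogeny category and only demand polarizability, this is harmless — but the isomorphism $R^1p^{\an}_\ast\BQ \simeq \CV$ you assert does depend on this choice and is not canonical. Stating this explicitly would make the sketch airtight.
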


\begin{rem} \label{dualab_hodge}
Let us denote by $A^{\vee}$ the dual abelian scheme of $A$. Then, the polarizable variation of Hodge structure $H^1(A^{\vee})$ is canonically identified with $H^1(A)^{\vee}(-1)$. 
\end{rem}

We will need the following facts about motives of abelian schemes. 
\begin{thm}{(\cite[Thm. 3.1, Cor. 3.2]{DM91})}
\begin{enumerate}[wide, labelwidth=!, labelindent=0pt, label=(\arabic*)]
\item For each $i \in \{0, \cdots, 2d \}$ there exist canonical idempotents $\Fp^i_A \in \CH^d(A \times_{S} A)$ (called the \emph{Chow-Künneth projectors}) uniquely characterized by the equation
\begin{equation} \label{canchowkun}
^t \Gamma_n \circ \Fp^i_A = n^i \Fp^i_A = \Fp^i_A \circ \ ^t \Gamma_n
\end{equation}
in $\CH^d(A \times_{S} A)$. We call $i$-th \emph{Chow-Künneth component} of $h(A)$ (or of $A$) the direct factor of $h(A)$ in $\CHM^s(S)$ determined by $\Fp^i_A$, and we denote it by $h^i(A)$.  

\item In $\CHM^s(S)$, we have an isomorphism

\begin{equation} \label{CKdec}
h(A) \simeq \bigoplus\limits_{i=0}^{2d} h^i(A)
\end{equation}

\item If $\rho$ is either the Betti or the $\ell$-adic realization, we have, for each $i \in \{0, \dots, 2d \}$
\begin{equation} \label{realH1}
\rho(h^i(A))=H^i(A).
\end{equation}
\end{enumerate}
\end{thm}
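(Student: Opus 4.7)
The plan is to construct the Chow--Künneth projectors $\Fp^i_A$ from the action on $h(A)$ of the multiplication-by-$n$ endomorphisms $[n]_A$, mimicking at the level of Chow correspondences the fact that on cohomology $H^i(A)$ these endomorphisms act by the scalar $n^i$. The natural place to look is the commutative subalgebra of $\End_{\CHM^s(S)}(h(A)) = \CH^d(A \times_S A)$ generated under composition by the classes ${}^t\Gamma_n$ and by $\Delta_A$.

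The crucial technical input is a relative Beauville-type eigenspace decomposition: one needs to show that composition with ${}^t\Gamma_n$ on $\CH^d(A \times_S A)$ acts semisimply with spectrum contained in $\{1, n, n^2, \ldots, n^{2d}\}$. This is the hard step, and I would approach it via the relative Fourier--Mukai transform on the abelian scheme $A \to S$ (for instance by treating the projection $A \times_S A \to A$ onto one factor as an abelian scheme over $A$ and invoking Beauville's decomposition for the latter). Granting this, expand ${}^t\Gamma_n = \sum_{i=0}^{2d} n^i \, x_n^i$ according to the eigenspaces, fix $2d+1$ distinct integers $n_0, \ldots, n_{2d}$, and invert the resulting Vandermonde system to express each $x_n^i$ as a fixed $\BQ$-linear combination of ${}^t\Gamma_{n_0}, \ldots, {}^t\Gamma_{n_{2d}}$, manifestly independent of $n$. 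These fixed classes are the proposed $\Fp^i_A$; by construction they satisfy the left half of \eqref{canchowkun}, and the right half follows by the symmetric argument with right composition, using the symmetry of the two factors of $A \times_S A$.

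Orthogonality, idempotence, and the decomposition \eqref{CKdec} would then fall out of a formal eigenvalue argument. For $i \neq j$, applying \eqref{canchowkun} to $\Fp^i_A \circ \Fp^j_A$ on both sides yields $n^i(\Fp^i_A \circ \Fp^j_A) = n^j(\Fp^i_A \circ \Fp^j_A)$ for every $n$, forcing the composition to vanish. The identity $\sum_i \Fp^i_A = \Delta_A$ can be read off by specializing the Vandermonde expansion to $n = 1$, since ${}^t\Gamma_1 = \Delta_A$; idempotence then follows from $\Fp^i_A \circ \Fp^i_A = \Fp^i_A \circ \Delta_A = \Fp^i_A$, and these orthogonal idempotents deliver \eqref{CKdec} in $\CHM^s(S)$. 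Uniqueness of the $\Fp^i_A$ subject to \eqref{canchowkun} follows from the very same eigenvalue manipulation.

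For statement (3), I would use that each realization $\rho$ respects composition of correspondences and sends ${}^t\Gamma_n$ to the pullback $[n]^*$ on cohomology. Consequently $\rho(\Fp^i_A)$ is an idempotent on $\bigoplus_j H^j(A)$ commuting with every $[n]^*$ with eigenvalue $n^i$; by the cohomological analogue of the uniqueness argument just described, it must coincide with the Künneth projector onto $H^i(A)$, whence $\rho(h^i(A)) = H^i(A)$. The principal obstacle in the plan is thus confined to the semisimplicity and eigenvalue bound on $\CH^d(A \times_S A)$: everything downstream reduces to formal linear algebra with Vandermonde determinants and spectral projectors.
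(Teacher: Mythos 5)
Your sketch faithfully reproduces the Deninger--Murre strategy that the paper cites for this theorem: establish a relative Beauville-type eigenspace decomposition of $\CH^d(A\times_S A)$ under composition with ${}^t\Gamma_n$ (via the relative Fourier--Mukai transform, applied to $A\times_S A$ viewed as an abelian scheme over $A$), then extract the $\Fp^i_A$ by Vandermonde interpolation, with orthogonality, idempotence, $\sum_i\Fp^i_A=\Delta_A$, and the realization statement falling out of the eigenvalue characterization exactly as you describe. Since the paper gives no proof beyond citing \cite{DM91}, there is nothing further to compare; your identification of the Beauville/Fourier input as the sole non-formal step is accurate.
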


\begin{rem}
\begin{enumerate}[wide, labelwidth=!, labelindent=0pt, label=(\arabic*)]
\item \label{transpchowkun} The Chow-Künneth projectors $\Fp^i_A$ are such that, for any $i$, the equation 
\[
^t \Fp^i_A = \Fp^{2d-i}_A
\]
holds (\cite[Remark 3, page 218]{DM91}). See \cite[proof of Thm. 3.1.1]{Kun94} for a proof. This implies that for all $0 \leq i \leq 2d$, there exists a canonical Poincaré duality isomorphism
\begin{equation} \label{pdiso}
h_S^{2d-i}(A)^{\vee} = h_S^i(A)(d)
\end{equation}
\item \label{functchowkun} For any homomorphism $f:A \rightarrow B$ of abelian schemes over $S$, we have 
\begin{equation*}
^t \Gamma_f \circ \Fp^i_B = \Fp^i_A \circ \ ^t \Gamma_f,
\end{equation*}
so that any such $f$ induces a map 
\begin{equation*}
f^*: h^i(B) \rightarrow h^i(A)
\end{equation*}
for all $i$ (\cite[Prop. 3.3]{DM91}). 
\item \label{isochowkun}
The previous two points imply that for any isogeny $f:A \rightarrow B$ of abelian schemes over $S$, we have 
\begin{equation*}
\Gamma_f \circ \Fp^i_A = \Fp^i_B \circ \ \Gamma_f,
\end{equation*}
so that any such $f$ induces a map 
\begin{equation*}
f_*: h^i(A) \rightarrow h^i(B)
\end{equation*}
\end{enumerate}
\label{propchowkun}
\end{rem}

\begin{rem} \label{dualab_mot}
The identification $H^1(A^{\vee}) = H^1(A)^{\vee}(-1)$ of Remark \ref{dualab_hodge} is the Hodge realization of a canonical isomorphism $h^1(A^{\vee})=h^1(A)^{\vee}(-1)$ in $\CHM^s(S)$. 
\end{rem}

For the following definition, note that for any motive $M$, for all $n$, the symmetric group $\mathfrak{S}_n$ acts on $M^{\otimes n}$. We denote by $\Sym^n M$ the image of the projector 
\begin{equation} \label{projsym}
\pi:=\sum\limits_{\sigma \in \mathfrak{S}_n} \sigma
\end{equation}
on $M^{\otimes n}$. 

\begin{thm}{\cite[Thm. 3.3.1]{Kun94}, \cite[p. 85]{Kun93}}
Let $A \rightarrow S$ be an abelian scheme of relative dimension $d$.
\begin{enumerate}[wide, labelwidth=!, labelindent=0pt, label=(\arabic*)]
\item For all $0 \leq i \leq 2d$, there exists a canonical isomorphism 
\begin{equation} \label{symiso}
\Sym^i h^1(A) \simeq h^i(A)
\end{equation}
\item
Any polarization of $A$ induces a Lefschetz isomorphism
\[
h^i(A) \simeq h^{2d-i}(A)(d-i)
\]
\end{enumerate}
\end{thm}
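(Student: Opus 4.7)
The plan is, for part (1), to exploit the additive structure of $A$ via the sum and diagonal maps. First, by iterating \eqref{CKdec} and using the uniqueness of the Chow--Künneth projectors characterized by \eqref{canchowkun} (applied to multiplication by $N$ on $A^n$, which restricts to multiplication by $N$ on each factor), one obtains a Künneth-type decomposition
\[
h^k(A^n) \simeq \bigoplus_{i_1 + \cdots + i_n = k} h^{i_1}(A) \otimes \cdots \otimes h^{i_n}(A).
\]
Next, the $i$-fold sum map $\sigma : A^i \to A$ is a morphism of abelian schemes, so Remark \ref{propchowkun}.\ref{functchowkun} gives $\sigma^* : h^i(A) \to h^i(A^i)$; projecting onto the summand $h^1(A)^{\otimes i}$ yields a morphism $h^i(A) \to h^1(A)^{\otimes i}$. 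The $\mathfrak{S}_i$-symmetry of $\sigma$ forces this morphism to be $\mathfrak{S}_i$-equivariant (with trivial action on the source), hence to factor through the direct factor $\Sym^i h^1(A) \subset h^1(A)^{\otimes i}$ defined by the idempotent \eqref{projsym}. A candidate inverse is built from the diagonal $\Delta : A \to A^i$ via the relation that $\sigma \circ \Delta$ is multiplication by $i$ on $A$, which by \eqref{canchowkun} forces $\Delta^* \circ \sigma^* = i^i \cdot \id_{h^i(A)}$; a similar computation on the symmetric factor, combined with a normalisation by $i!$ coming from the combinatorics of the symmetrizer \eqref{projsym}, identifies the two composites with the identity.

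For part (2), a polarization of $A$ furnishes an isogeny $\lambda : A \to A^\vee$, hence by Remark \ref{propchowkun}.\ref{isochowkun} a morphism $\lambda_* : h^1(A) \to h^1(A^\vee)$. Composing with the standard duality isomorphism $h^1(A^\vee) \simeq h^1(A)^\vee(-1)$ for dual abelian schemes yields an isomorphism $h^1(A) \xrightarrow{\sim} h^1(A)^\vee(-1)$. Applying $\Sym^i$ and invoking part (1) gives
\[
h^i(A) \simeq \Sym^i\bigl(h^1(A)^\vee(-1)\bigr) = (\Sym^i h^1(A))^\vee(-i) = h^i(A)^\vee(-i),
\]
which the Poincaré duality isomorphism \eqref{pdiso} finally identifies with $h^{2d-i}(A)(d-i)$, as required.

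The main obstacle I anticipate is in part (1): rigorously identifying the composition $\Sym^i h^1(A) \to h^i(A) \to \Sym^i h^1(A)$ (and its reverse) with the identity purely at the level of $\CHM^s(S)$, not merely after realization. The verification reduces to a calculation in $\CH^{id}(A^i \times_S A^i)$ (via \eqref{beilmorph}) involving the Chow--Künneth projectors $\Fp^i_A$, the symmetrizer \eqref{projsym}, and the graphs of the sum and diagonal maps; I expect the eigenvalue method based on \eqref{canchowkun}, which pins down each summand of the relevant decomposition uniquely, together with the adjunction/duality formulae of Lemma \ref{adjformulae} when needed, to provide the cleanest route.
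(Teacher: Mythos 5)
This theorem is cited from Künnemann (\cite{Kun94}, \cite{Kun93}) and not reproved in the paper, so the comparison is really with Künnemann's own arguments. For part (1) your skeleton — sum map $\sigma : A^i \to A$, diagonal $\Delta : A \to A^i$, iterated Künneth decomposition, and the eigenvalue characterization \eqref{canchowkun} — is a legitimate route, but the step you yourself flag is indeed a genuine gap as written: the relation $\Delta^* \circ \sigma^* = i^i$ is a sum of contributions over \emph{all} Künneth components of $h^i(A^i)$, not just the $h^1(A)^{\otimes i}$ piece through which you project, so it does not directly control the composite $h^i(A) \to \Sym^i h^1(A) \to h^i(A)$. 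You can rescue this: inserting the componentwise multiplication $[n_1,\dots,n_i]^* : A^i \to A^i$ between $\sigma^*$ and $\Delta^*$, and using $\sigma \circ [n_1,\dots,n_i] \circ \Delta = [n_1+\cdots+n_i]$ together with \eqref{canchowkun}, separates the Künneth components by multinomial coefficients and gives $\Delta^*_{(1,\dots,1)} \circ \sigma^*_{(1,\dots,1)} = i!\,\mathrm{id}_{h^i(A)}$, so your map is at least split injective. To conclude it is an isomorphism you still need an extra ingredient (Kimura finite-dimensionality of $h^1(A)$ plus conservativity of realization on smooth Chow motives, or Künnemann's actual device, the Fourier--Mukai transform between $A$ and $A^\vee$), which your sketch does not supply. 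So: right idea, with a gap that is fillable but not filled.

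For part (2) you take a genuinely different and shorter route than \cite{Kun93}, which constructs the $\mathfrak{sl}_2$-action of the Lefschetz operator $L$ (cup product with the polarization class) and its adjoint $\Lambda$. Your derivation — $\lambda_* : h^1(A) \xrightarrow{\sim} h^1(A^\vee)$ via Remark \ref{propchowkun}, then $h^1(A^\vee) \simeq h^1(A)^\vee(-1)$, then $\Sym^i$ and Poincaré duality \eqref{pdiso} — is correct, and the Tate twists do come out right; but two caveats. First, you are silently inverting the logic of the paper's Corollary, which \emph{derives} the isomorphism $I : h^1(A) \simeq h^1(A)^\vee(-1)$ from the Lefschetz iso of part (2) combined with \eqref{pdiso}, rather than the other way around — so your argument and the paper's Corollary are essentially the same move read in opposite directions. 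Second, the ``standard duality isomorphism'' $h^1(A^\vee) \simeq h^1(A)^\vee(-1)$ is itself a nontrivial motivic statement (a Poincaré-bundle computation, also due to Künnemann, or alternatively deducible from Prop. \ref{abelhodge} plus the full faithfulness \eqref{isoEndH1} after some descent); it should be cited rather than taken for granted. In sum, part (2) is correct but trades one citation for another and does not realize the isomorphism as cup product with $L^{d-i}$, which is what ``Lefschetz isomorphism'' usually means.
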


\begin{cor}
Combining point (2) above with the isomorphism \eqref{pdiso}, any polarization of $A$ induces an isomorphism 
\begin{equation} \label{I}
I : h^1(A) \simeq h^1(A)^{\vee}(-1)
\end{equation}
By adjunction (\eqref{adj}), the isomorphism $I$ induces a map
\begin{equation} \label{p}
p: h^1(A) \otimes  h^1(A) \rightarrow \BL
\end{equation}
and the isomorphism $I^{-1}$ induces a map
\begin{equation} \label{i}
\iota: \BL \rightarrow h^1(A) \otimes h^1(A)
\end{equation}
\end{cor}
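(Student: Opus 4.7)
The plan is to first construct the isomorphism $I$ by chaining the Lefschetz isomorphism of part (2) above with the Poincaré duality isomorphism \eqref{pdiso} specialized at $i=1$, and then to produce $p$ and $\iota$ as the images of $I$ and $I^{-1}$ under the adjunction \eqref{adj}. Since $\CHM^s(S)$ is rigid pseudo-abelian, everything will reduce to formal manipulations in such a category.

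First I would specialize part (2) at $i=1$ to obtain, from the chosen polarization, an isomorphism
\[
L \colon h^1(A) \xrightarrow{\ \sim\ } h^{2d-1}(A)(d-1).
\]
In parallel, \eqref{pdiso} at $i=1$ reads $h^{2d-1}(A)^{\vee} \simeq h^1(A)(d)$. Dualizing both sides, using the general identity $(M(n))^{\vee} \simeq M^{\vee}(-n)$ valid in any rigid symmetric monoidal category, and invoking $(h^{2d-1}(A)^{\vee})^{\vee} \simeq h^{2d-1}(A)$, I get $h^{2d-1}(A) \simeq h^1(A)^{\vee}(-d)$. Plugging this into $L$ and collecting Tate twists yields
\[
I \colon h^1(A) \xrightarrow{\ \sim\ } h^{2d-1}(A)(d-1) \xrightarrow{\ \sim\ } h^1(A)^{\vee}(-d)(d-1) = h^1(A)^{\vee}(-1),
\]
which is the required isomorphism.

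Next, I would rewrite $h^1(A)^{\vee}(-1) = h^1(A)^{\vee} \otimes \BL$, so that $I$ is an element of $\Hom_{\CHM^s(S)}(h^1(A), h^1(A)^{\vee} \otimes \BL)$. Applying \eqref{adj} with $A = B = h^1(A)$ and $C = \BL$ gives a canonical bijection with $\Hom_{\CHM^s(S)}(h^1(A) \otimes h^1(A), \BL)$, and I set $p := \mathrm{adj}^{-1}(I)$. For $\iota$, I would view $I^{-1}$ as an element of $\Hom_{\CHM^s(S)}(h^1(A)^{\vee} \otimes \BL, h^1(A))$, and apply \eqref{adj} with $A = h^1(A)^{\vee}$, $B = \BL$, $C = h^1(A)$; this yields a bijection with $\Hom_{\CHM^s(S)}(\BL, (h^1(A)^{\vee})^{\vee} \otimes h^1(A))$. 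The canonical double-dual identification $(h^1(A)^{\vee})^{\vee} \simeq h^1(A)$ in the rigid category $\CHM^s(S)$ (see \cite[6.1]{AK02}) then gives the desired $\iota \colon \BL \to h^1(A) \otimes h^1(A)$.

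The statement is essentially an assembly of pre-existing data, so there is no serious technical obstacle. The only bookkeeping point worth checking is that the Tate-twist and transposition conventions used in \eqref{pdiso} and in Künnemann's Lefschetz isomorphism are compatible, which amounts to the symmetry $^{t}\Fp^i_A = \Fp^{2d-i}_A$ for the Chow--Künneth projectors recalled in Remark \ref{propchowkun}.\ref{transpchowkun}.
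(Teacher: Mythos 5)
Your argument is correct and is essentially the paper's own: the corollary has no separate proof in the text precisely because it amounts to the chain you wrote out — specialize Künnemann's Lefschetz isomorphism at $i=1$, feed in \eqref{pdiso} (dualized, or equivalently used at $i=2d-1$), collect Tate twists to land on $h^1(A)^{\vee}(-1)$, and then set $p = adj^{-1}(I)$ and $\iota = adj(I^{-1})$ exactly as the paper later uses in the proof of Proposition \ref{phi_g^e}. Your bookkeeping on the double-dual identification and on $(M(n))^{\vee}\simeq M^{\vee}(-n)$ is the only content, and you have it right.
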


\begin{rem}
 \label{sym}
The realizations of the isomorphisms \eqref{symiso} coincide indeed, for each $i$, with the classical isomorphisms
\[
\Lambda^i H^1 (A) \simeq H^i(A)
\]
The fact that symmetric powers of $h^1(A)$ realize to alternating powers of $H^1(A)$ is a consequence of the definition of the symmetry constraint of the tensor structure on motives, see \cite[Rem. 2.5]{Anc15} for a discussion. 
\end{rem}

\begin{prop}{\cite[Prop. 2.2.1]{Kin98}} \label{thm_kings}
The functor $h^1$ (on the category of abelian schemes over $S$) induces an isomorphism of $\BQ$-algebras 
\begin{equation} \label{isoEndH1}
\End(A)^{\op} \simeq \End_{\CHM^s(S)}(h^1(A))
\end{equation}
\end{prop}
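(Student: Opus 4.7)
The plan is to construct the map via the functoriality of the Chow-Künneth decomposition, prove injectivity by passage to the Hodge realization using Proposition \ref{abelhodge}, and then address the harder surjectivity via an eigenspace analysis of $\CH^d(A \times_S A)$.

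For the construction, every $f \in \End(A)$ gives a morphism $f^*: h^1(A) \to h^1(A)$ in $\CHM^s(S)$ by Remark \ref{propchowkun}, represented under \eqref{beilmorph} by the correspondence $\Fp^1_A \circ\ ^t\Gamma_f$ (equivalently $^t\Gamma_f \circ \Fp^1_A$). Since $^t\Gamma_{gf} =\ ^t\Gamma_f \circ\ ^t\Gamma_g$, the assignment $f \mapsto f^*$ reverses composition, so it extends to a $\BQ$-algebra homomorphism $\Phi: \End(A)^{\op} \to \End_{\CHM^s(S)}(h^1(A))$. The $\BQ$-linearity $(f+g)^* = f^* + g^*$ is non-trivial because $\Gamma_{f+g} \neq \Gamma_f + \Gamma_g$ in $\CH^d(A \times_S A)$; however, the difference lies in Chow-Künneth components of $h(A \times_S A)$ of bi-weight different from $(1,1)$, and is annihilated after composing with $\Fp^1_A$ on both sides, by the defining eigenvalue equation \eqref{canchowkun}.

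For injectivity, I would fix an embedding $E \hookrightarrow \BC$ and apply the Hodge realization $\rho_H$. By \eqref{realH1}, the composition
\[
\End(A)^{\op} \xrightarrow{\Phi} \End_{\CHM^s(S)}(h^1(A)) \xrightarrow{\rho_H} \End_{\VHS(S^{\an})}\bigl(H^1(A^{\an})\bigr)
\]
factors through the endomorphism functor induced by $H^1$ on $\End(A_{\BC})^{\op} \otimes \BQ$, which is a bijection by Proposition \ref{abelhodge}. Combined with the injectivity of $\End(A) \hookrightarrow \End(A_{\BC})$, this forces $\Phi$ to be injective.

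Surjectivity is the main obstacle. An element of $\End_{\CHM^s(S)}(h^1(A))$ is a class $\xi \in \Fp^1_A \circ \CH^d(A \times_S A) \circ \Fp^1_A$, which by \eqref{canchowkun} is the bi-eigenspace of $\CH^d(A \times_S A)$ on which both $(n \times 1)^*$ and $(1 \times n)^*$ act as multiplication by $n$. The task is to exhibit such a class as the $\Fp^1_A \otimes \Fp^1_A$-projection of a transpose graph $^t\Gamma_f$ for some $f \in \End(A)$. The argument passes through Beauville's eigendecomposition of $\CH^*(A \times_S A)$ under the $(n, m)^*$-action, combined with the Pontryagin product structure coming from the addition laws on both factors: the weight-$(1,1)$ piece is characterized as the subspace of cycles that are ``bilinear'' in the group-theoretic sense, and these classically correspond to graphs of homomorphisms of abelian schemes. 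The compatibility of the Chow-Künneth decomposition with isogenies recorded in Remark \ref{propchowkun} is what allows the classical argument over a point to be transported to the relative setting, and this is where the real work of the proof resides.
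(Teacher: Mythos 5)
The paper does not give a proof of this proposition at all: it cites \cite[Prop. 2.2.1]{Kin98} and moves on. So there is no in-text argument to compare against, and your proposal is effectively a reconstruction of Kings' (and, behind him, K\"unnemann's and Deninger--Murre's) argument. With that caveat, here is an assessment.

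Your construction step and your injectivity step are both correct. In particular you are right to flag additivity $(f+g)^* = f^* + g^*$ as the nontrivial point of the construction; the cleanest way to see it is to use the addition map $m\colon A\times_S A\to A$ and check, against the characterizing equation \eqref{canchowkun}, that $m^*$ restricted to $h^1$ is the diagonal $h^1(A)\to h^1(A)\oplus h^1(A)$, from which additivity of $f\mapsto f^*$ on $h^1$ formally follows. Injectivity via Hodge realization, \eqref{realH1} and Proposition \ref{abelhodge}, together with faithfulness of base change $\End(A)\hookrightarrow\End(A_{\BC})$, is exactly right. Your identification of $\End_{\CHM^s(S)}(h^1(A))$ with the bi-eigenspace of $\CH^d(A\times_S A)$ on which $(\id\times n)^*$ and $(n\times\id)^*$ both act by $n$ is also correct, being a restatement of \eqref{canchowkun} composed on the two sides.

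The surjectivity step, however, is where you have a genuine gap, and you say so yourself. Invoking Beauville's decomposition and Pontryagin products only isolates the target eigenspace; it does not by itself show that every class there is the $(\Fp^1_A\otimes\Fp^1_A)$-projection of a graph ${}^t\Gamma_f$. The missing input is the identification of that eigenspace with a group of line bundles, not of arbitrary codimension-$d$ cycles. The standard route, and the one underlying the cited result, runs through duality: using $h^1(A)^\vee\simeq h^1(\hat A)(1)$ for the dual abelian scheme $\hat A$, one rewrites
\[
\End_{\CHM^s(S)}(h^1(A))\simeq \Hom_{\CHM^s(S)}\bigl(\one_S,\ h^1(\hat A)\otimes h^1(A)(1)\bigr),
\]
and since $h^1(\hat A)\otimes h^1(A)$ is the K\"unneth $(1,1)$-component of $h^2(\hat A\times_S A)$, the right-hand side is the weight-$(1,1)$ part of the relative Picard group of $\hat A\times_S A$, i.e.\ divisorial correspondences rigidified along the zero sections. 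The biextension/Poincar\'e-bundle formalism (theorem of the cube in the relative setting) identifies that group with $\Hom_S(\hat A,\hat A)\otimes\BQ\simeq\End(A)$, and one then checks that the resulting map is inverse to your $\Phi$. Your phrase ``bilinear cycles correspond to graphs of homomorphisms'' is a gesture at exactly this, but without passing through $\CH^1$ of $\hat A\times_S A$ and rigidified line bundles it has no content in the relative setting --- there is no general statement that weight-$(1,1)$ cycle classes in $\CH^d(A\times_S A)$ are graphs. This reduction to divisors is the actual content of the cited proposition and needs to be supplied rather than sketched.
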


\section{PEL Shimura varieties and the motivic canonical construction} \label{motcanconstr}

In this section, we keep the notations concerning abelian schemes of the previous section. We will denote by $F$ an arbitrary number field. 

\subsection{Rational PEL data and PEL type Shimura varieties}  
A \emph{rational PEL datum} is a tuple $(V, B, *, \langle \cdot, \cdot \rangle, h_0)$ where 

- $B$ is a finite dimensional semisimple $\BQ$-algebra with positive involution $*$

- $V$ is a finite dimensional $\BQ$-vector space on which $B$ acts

- $\langle \cdot, \cdot \rangle$ is an alternating pairing on $V$

- $h_0$ is a $\BR$-algebra homomorphism $\BC \rightarrow \End_{B_{\BR}}(V_{\BR})$

These data are required to satisfy a series of conditions, which we will not recall here ; we refer the reader to \cite[5.1]{Lan17} for complete definitions and for more details on the facts that we are going to state.

\begin{dfi}
Let $(V, B, *, \langle \cdot, \cdot \rangle)$ be a rational PEL datum.
\begin{enumerate}[wide, labelwidth=!, labelindent=0pt, label=(\arabic*)]
\item We define the group $G$ underlying (or associated with) $(V, B, *, \langle \cdot, \cdot \rangle, h_0)$ as the (reductive) $\BQ$-algebraic group of automorphisms of $V$ commuting with $B$ and preserving the pairing $\langle \cdot, \cdot \rangle$ up to a \emph{rational} scalar.
\item If $G$ is associated with $(V, B, *, \langle \cdot, \cdot \rangle, h_0)$, we denote by $\BQ(1)$ the 1-dimensional representation of $G$ (called \emph{Tate twist}) on which $G$ acts by the character $g \mapsto gg^*$. For any representation $W$ of $G$, for any positive integer $n$, we denote $W(n):=W \otimes \BQ(1)^{\otimes n}$, $W(-n):=W \otimes (\BQ(1)^{\vee})^{\otimes n}$.
\end{enumerate}
\end{dfi}

\begin{rem} \label{tatetwistrep}
If $G$ is the group associated to a rational PEL datum $(V, B, *, \langle \cdot, \cdot \rangle, h_0)$, the pairing $\langle \cdot, \cdot \rangle$ induces a morphism of $G$-representations
\begin{equation} \label{canpairing}
V \otimes V \rightarrow \BQ(1) 
\end{equation}
and an isomorphism
\begin{equation} \label{candual}
V^{\vee} \simeq V(-1).
\end{equation}
\end{rem}

We put $X:=G(\BR) \cdot h_0$, and we will suppose from now on that $(G,X)$ is a \emph{Shimura datum}. It is then called a \emph{PEL-type} Shimura datum. In this case, for any compact open subgroup $K \subset G(\BA_f)$, the coset space
\begin{equation*}
G(\BQ) \backslash X \times G(\BA_f)/K
\end{equation*} 
coincides with the $\BC$-points of a quasi-projective algebraic variety $S_K$, called Shimura variety (of PEL type) associated with $G$ and of level $K$, canonically defined over a number field $E$ (called its \emph{reflex field}). 

\begin{conv}
For the rest of the paper, whenever we talk of a Shimura variety $S_K$, we will implicitly suppose that $K$ is small enough, so that $S_K$ is \emph{smooth}. 
\end{conv}

The varieties $S_K$ can be identified with (a disjoint union of connected components of) moduli spaces of abelian varieties $A$ equipped, in particular, with a polarization of a specific type induced by \eqref{candual}, with an injection $B \hookrightarrow \End(A)$, and with a suitable \emph{level structure} (the terminology PEL comes in fact from \emph{polarization, endomorphisms, level}). As a consequence, there exist \emph{universal abelian schemes} $A_K \rightarrow S_K$ of relative dimension $d=\frac{1}{2} \dim_{\BQ} V$, equipped with a polarization and with an injection of $\BQ$-algebras $B \hookrightarrow \End(A_K)$. 

\subsection{The canonical construction of motivic sheaves on PEL Shimura varieties}

Following the notations and conventions of the previous subsection, let $(G,X)$ be a PEL Shimura datum, with underlying rational PEL datum 
\[
(V, B, *, \langle \cdot, \cdot \rangle, h_0)
\]
and associated Shimura varieties $S_K$.

The connected components of $X$ are all homeomorphic to each other and contractible. They coincide with the universal cover of each connected component $S_K^{\an,+}$ of $S_K^{\an}$. Under our assumptions, any such $S_K^{\an,+}$ has fundamental group isomorphic to $G(\BQ) \cap gKg^{-1}$ for a suitable $g \in G(\BA_f)$. This provides, for any representation $V$ of $G_F$, a local system $\mu^K(V)$ of $F$-vector spaces on $S_K^{\an}$, functorially in $V$. The total space of the vector bundle associated with $\mu^K(V)$ is given by 
\begin{equation} \label{analyticloc}
G(\BQ) \backslash V(\BR) \times X \times G(\BA_f) / K
\end{equation}
with the obvious left action of $G(\BQ)$ on $V(\BR)$ defined by the representation $V$. 

By the very definition of a Shimura datum, this construction gives rise to the following.
\begin{dfi} \label{Hodgecan}
The \emph{Hodge canonical construction functor} is the exact tensor functor 
\begin{equation}
\mu_{H}^{K}: \Rep(G_F) \rightarrow \VHS_F(S_K).
\end{equation}
naturally enriching $\mu^K$ (cfr. \cite[1.18]{Pin90}).
\end{dfi}
An analogous construction can be carried out in the étale setting:
\begin{dfi} \label{lcan}
The \emph{$\ell$-adic canonical construction functor} is the natural functor (cfr. \cite[4.1]{Pin92})
\begin{equation*}
\mu_{\ell}^{K}: \Rep(G_F) \rightarrow \Et_{\ell, F}(S_K)
\end{equation*}
\end{dfi}

\begin{rem}
\begin{enumerate}[wide, labelwidth=!, labelindent=0pt, label=(\arabic*)]
\item By the construction of the universal abelian scheme $A_K$ over the PEL type Shimura variety $S_K$, each of $\mu^K_H$ and $\mu^K_{\ell}$ sends (in the appropriate category)
\[
V^{\vee} \mapsto H^1(A_K)
\]
\item Applying $\mu^K_H$ to the pairing \eqref{canpairing}, we get a polarization (in the sense of variations of Hodge structure on $S^{\an}_K$)
\begin{equation} \label{pairinghodge}
\langle \cdot, \cdot \rangle : H^1(A_K)^{\vee} \times H^1(A_K)^{\vee} \rightarrow \BQ(1)
\end{equation}
\end{enumerate}
\end{rem}

We now pass to define the algebra of relative cycles that will play a vital role in what follows. 
\begin{dfi} \label{Lef_alg}{\cite[Def. 5.2]{Anc15}}
Let $d$ be the relative dimension of the universal abelian scheme $A_K$ over $S_K$. For each positive integer $i$, the $F$-algebra $\CB_{i,F}$ is defined\footnote{Notice that we are adapting the original definition, given for general abelian schemes, to the more restricted context of \emph{loc. cit.}, Section 8.3.} as the sub-$F$-algebra
\begin{equation*}
\CB_{i,F} \hookrightarrow \End_{\CHM(S_K)_F}((h^1(A_K))^{\otimes i})
\end{equation*}
generated by the following:
\begin{itemize}
\item the permutation group $\mathfrak{S}_i$,
\item the ring $B_F^{\op} \otimes \Id^{\otimes i-1}_{h^1(A_K)}$ (seen as a subalgebra of $\End_{\CHM(S_K)_F}((h^1(A_K))^{\otimes i}$ through Proposition \ref{thm_kings}),
\item if $i \geq 2$, the morphism $P \otimes \Id_{h^1(A_K)}^{\otimes (i-2)}$, where $P$ is defined, using the morphisms $\iota$ and $p$ associated in \eqref{i} and \eqref{p} with the given polarization on $A_K$, as the projector 
\begin{equation*}
\frac{1}{2d} \iota \circ p \in \End_{\CHM(S_K)_F}(h^1(A_K) \otimes h^1(A_K))
\end{equation*}
\end{itemize}
\end{dfi}

\begin{dfi}{\cite[8.1]{Anc15}} \label{Lef_alg^r}
Let $A_K$ be as in the previous definition, and let $i,r$ be positive integers. Use the identifications 
\[
h^i(A_K^r) = \Sym^i h^1(A_K^r)
\]
\[
h^1(A_K^r)=h^1(A_K)^{\oplus r}
\]
and
\[
(h^1(A_K)^{\oplus r})^{\otimes i}=(h^1(A_K)^{\otimes i})^{\oplus r^i}
\]
to see the algebra $\CB_{i,F}^{\oplus r^i}$ as a subalgebra of $\End_{\CHM^s(S_K)}((h^1(A_K^r))^{\otimes i})$. Using the projector $\pi \in \End_{\CHM^s(S_K)}((h^1(A_K^r))^{\otimes i})$ of \eqref{projsym} (for the object $M=h^1(A_K^r)$), we define the sub-algebra $\CB_{i,r,F}$ of $\CB_{i,F}^{\oplus r^i}$ as
\[
\CB_{i,r,F} := \pi \circ \CB_{i,F}^{\oplus r^i} \circ \pi 
\]
\end{dfi} 

The algebra $\CB_{i,F}$ is the key for the following fundamental theorem, allowing one to lift the canonical construction functors to the motivic setting.
\begin{thm}{(\cite[Thms. 8.5-8.6]{Anc15}} \label{constmot}
There exists a $F$-linear monoidal\footnote{It is not a \emph{tensor} functor since it is not symmetric, because of the definition of the symmetry constraint on motives, cfr. Remark \ref{sym}.} functor 
\begin{equation}
\tilde{\mu}: \Rep(G_F) \rightarrow \CHM^s(S_K)_F
\end{equation}
called the \emph{motivic canonical construction}, commuting with Tate twists and sending $V$ to $h^1(A)(1)$, such that, if $\rho$ is the Hodge, resp. étale realization, then $\rho \circ \tilde{\mu}$ is isomorphic to $\mu^K_H$, resp. $\mu^K_{\ell}$. 

It induces isomorphisms of $F$-algebras
\begin{equation*}
\End_{\Rep (G_F)}((V^{\vee})^{\otimes i}) \simeq \CB_{i,F}
\end{equation*}
and
\begin{equation*} 
\End_{\Rep (G_F)}((\Lambda^i(V^{\vee}))^{\oplus r}) \simeq \CB_{i,r,F}
\end{equation*}
\end{thm}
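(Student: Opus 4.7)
The plan is to follow a Tannakian strategy. Since the PEL group $G$ is reductive and the standard representation $V^\vee$, together with its dual and Tate twists, tensor-generates $\Rep(G_F)$, any object of $\Rep(G_F)$ is (up to a Tate twist) a direct summand, cut out by an idempotent $e$, of some tensor power $(V^\vee)^{\otimes i}$; and any morphism between such summands comes from an element of $\End_{\Rep(G_F)}((V^\vee)^{\otimes i})$. Having decided to send $V^\vee$ to $h^1(A_K)$, what we need is a functorial way to lift every $G$-equivariant endomorphism of $(V^\vee)^{\otimes i}$ to an endomorphism of $h^1(A_K)^{\otimes i}$ in $\CHM^s(S_K)_F$. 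Once that is done, $\tilde\mu(W)$ is defined as the image of the lifted idempotent, and the monoidality, compatibility with Tate twists and compatibility with realizations will all follow by construction from the behaviour on generators, using that $\rho_H \circ h^1 = H^1$ and the equivalence of Proposition~\ref{abelhodge}.

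The crucial algebraic input is the identification $\CB_{i,F} \simeq \End_{\Rep(G_F)}((V^\vee)^{\otimes i})$. Realization furnishes an $F$-linear algebra map $\CB_{i,F} \to \End_{\Rep(G_F)}((V^\vee)^{\otimes i})$ by sending each of the three families of generators of $\CB_{i,F}$ to the corresponding $G$-equivariant operator on $(V^\vee)^{\otimes i}$: the symmetric group $\mathfrak{S}_i$ acts by permutation on both sides; the $B^{\op}$-action on $h^1(A_K)$ supplied by Proposition~\ref{thm_kings} corresponds under Hodge realization and Proposition~\ref{abelhodge} to the tautological $B^{\op}$-action on $V^\vee$ coming from the PEL datum; and the projector $P = \frac{1}{2d}\,\iota\circ p$, built from the polarization via~\eqref{p} and~\eqref{i}, realizes by Remark~\ref{sym} to the endomorphism of $(V^\vee)^{\otimes 2}$ obtained by composing the pairing~\eqref{canpairing} with its dual~\eqref{candual}. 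The map is injective because realizations are faithful on sub-Hodge structures of $H^1(A_K)^{\otimes i}$, and its image lands in $\End_{\Rep(G_F)}((V^\vee)^{\otimes i})$ because all generators are tautologically $G$-equivariant.

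The main obstacle is the reverse inclusion: one needs to show that every $G$-equivariant endomorphism of $(V^\vee)^{\otimes i}$ is generated by the three listed families. This amounts to the first and second fundamental theorems of invariant theory for the classical groups appearing in PEL data (types A, C, D), applied after decomposing $V$ according to the simple factors of $B$. After a suitable extension of scalars this decomposes $G_{\bar\BQ}$ into a product of unitary, symplectic and orthogonal groups, each of which admits a Brauer-style description of the centralizer of its standard representation on tensor powers in terms of permutations and contractions built from the form; the $B^{\op}$-action then accounts exactly for the intertwining between the simple factors, so Brauer's description assembles into the desired generation statement. A Galois descent argument transfers the equality back to $F$.

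Finally, $\tilde\mu$ is extended to all of $\Rep(G_F)$ through the Karoubi envelope: a general object $W$ is presented as the image of an idempotent $e$, which is lifted via the isomorphism above to an idempotent $\tilde e$ in $\CB_{i,F}$, and $\tilde\mu(W)$ is set to be the image of $\tilde e$; morphisms between such images are lifted from $\Hom_{\Rep(G_F)}((V^\vee)^{\otimes i},(V^\vee)^{\otimes j})$ by re-embedding into $\End_{\Rep(G_F)}((V^\vee)^{\otimes i+j})$ through~\eqref{candual}. Independence from choices, functoriality, monoidality, and the compatibility $\rho\circ\tilde\mu \simeq \mu_H^K, \mu_\ell^K$ all reduce, via the faithfulness of realization on the relevant Chow groups, to the known behaviour of the classical canonical constructions, which is built into how the generators are matched. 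The second isomorphism involving $\CB_{i,r,F}$ is obtained by the same argument applied to the $r$-fold fibred power $A_K^r$, using the motivic identifications $h^1(A_K^r)=h^1(A_K)^{\oplus r}$ and $\Sym^i h^1(A_K^r) \simeq h^i(A_K^r)$ recorded in~\eqref{symiso}.
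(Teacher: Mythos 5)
The statement is not proved in the paper---it is cited verbatim from Ancona (\cite[Thms.~8.5--8.6]{Anc15})---so there is no internal proof to compare your sketch against. That said, your outline does trace the overall architecture of Ancona's argument: Tannakian generation of $\Rep(G_F)$ by the standard representation and its dual, identification of the Lefschetz algebra $\CB_{i,F}$ with $\End_{\Rep(G_F)}((V^\vee)^{\otimes i})$ via invariant theory for the classical groups appearing in PEL data, and extension to all of $\Rep(G_F)$ through the Karoubi envelope.

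There is, however, a genuine gap at the injectivity step. You justify it by asserting that ``realizations are faithful on sub-Hodge structures of $H^1(A_K)^{\otimes i}$.'' This is not a theorem one can invoke: faithfulness of the realization functor on the relevant Chow groups of self-products of an abelian scheme is precisely the kind of statement that lies on the conjectural side of Bloch--Beilinson, and cannot be taken for granted. What actually makes the argument work is that the three families of generators of $\CB_{i,F}$ already satisfy, \emph{at the Chow level}, all of the relations their cohomological images satisfy. For the $B^{\op}$-action this is Kings's theorem (Prop.~\ref{thm_kings}); for the polarization projector $P$, the Lefschetz isomorphism, and the resulting Brauer-type relations it is Künnemann's Chow-Lefschetz theory for abelian schemes (\cite{Kun93,Kun94}, recalled in Section~\ref{motab}). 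With those inputs one shows that $\CB_{i,F}$ and $\End_{\Rep(G_F)}((V^\vee)^{\otimes i})$ are quotients of the same abstract Brauer-type algebra by the same ideal of relations, which is what delivers the isomorphism (and, incidentally, the well-definedness of $\tilde\mu$ on morphisms). Your sketch replaces this essential Künnemann--Kings input by an appeal to abstract realization faithfulness, and without it the injectivity does not close. A smaller imprecision: Remark~\ref{sym} concerns symmetric versus exterior powers and does not by itself identify the realization of $P$ with the Hodge-theoretic polarization projector; that identification requires tracking $\iota$ and $p$ through the duality isomorphisms \eqref{I}--\eqref{i} and their realizations.
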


\begin{rem} \label{factpowab}
For every positive integer $r$, let $A_K^{r} \rightarrow S_K$ be the $r$-fold fibred product of $A_K$ with itself over $S_K$. Observe that since the group $G$ underlies a PEL Shimura datum, it is isomorphic over $\BR$ to a product of classical groups (\cite[pag. 51]{Lan17}); hence, the direct sum $V \oplus V^{\vee}$ of the standard representation $V$ with its dual generates the Tannakian category $\mbox{Rep}(G)$, by taking tensor products and direct summands. As a consequence, Theorem \ref{constmot} implies that when $F=\BQ$, every object in the essential image of $\tilde{\mu}$ is isomorphic (using the isomorphism \eqref{I} when necessary) to a finite direct sum $\bigoplus \limits_{j} M_j$, where each $M_j$ is a direct factor of a Tate twist of a Chow motive of the form $h(A^{r_j}_K)$, for suitable $r_j$'s. An analogous statement holds for more general coefficients $F$. 
\end{rem}

\section{The Hecke algebra} \label{hecke}

Keep the notations and the conventions of the previous section. In particular, we fix a Shimura variety $S_K$ of PEL-type, with underlying group $G$ and associated PEL datum $(V, B, *, \langle \cdot, \cdot \rangle, h_0)$. In this section, which is the heart of the paper, we are going to construct the motivic Hecke algebra $\mathcal{H}^M(G,K)$ lifting the classical, \emph{cohomological} Hecke algebra $\mathcal{H}(G,K)$ acting on $H^{\bullet}(S_K,V)$. In the first subsection we will review the latter algebra; in the second subsection, we will switch to the motivic setting.

The construction starts by considering, for any given $g \in G(\BA_f)$, the diagram of finite, étale morphisms
\begin{center}
\begin{tabular}{c}
\xymatrix{
& S_{K^g} \ar[dl]_{[\cdot 1]} \ar[dr]^{[\cdot g]} \\
S_K & & S_K
}
\end{tabular}
\end{center}
defined as follows. For every $g \in G(\BA_f)$, there are holomorphic, finite maps (cfr. \cite[3.4]{Pin90})
\begin{align*} 
[ \cdot g]: S_{K^g}^{\an} & \rightarrow S_K^{\an} \\ \nonumber
G(\BQ)(x,h)K^g &\mapsto G(\BQ)(x,hg)K 
\end{align*}
Then (cfr. \cite[(3.4)]{Pin92}), these maps algebraize and give rise to maps
\begin{equation} \label{finmapg}
[\cdot g]: S_{K^g} \rightarrow S_K
\end{equation}
which are étale coverings if $K$ is small enough (which we will always suppose).

\subsection{The cohomological Hecke algebra} For the facts recalled in this section, we refer the reader to \cite[5.2 and 15.2-5]{HG24}.

Denote by $\mu^K$ either the Hodge or the $\ell$-adic canonical construction (Definitions \ref{Hodgecan}, \ref{lcan}). Fix $V \in \Rep(G_F)$. For any $g \in G(\BA_f)$, there are canonical isomorphisms (in the appropriate categories)
\begin{equation} \label{theta}
\xymatrix{
[\cdot 1]^* \mu^K(V) \ar[r]^{\overset{\theta^{\mbox{\tiny{-1}}}_1}{\simeq}} \ar@/_0.4cm/[rr]_{\theta} & \mu^{K_g}(V) \ar[r]^{\overset{\theta_g}{\simeq}} &  [\cdot g]^* \mu^K(V)
}
\end{equation}
In the analytic case, the isomorphism $\theta_g$ is given on the underlying total spaces (cfr. Eq. \eqref{analyticloc}) by
\begin{align}
\mu^{K_g}(V) & \rightarrow [\cdot g]^* \mu^K(V)=\mu^K(V) \times_{S_K,[\cdot g]} S_{K,g} \nonumber \\
[v,x,h] & \mapsto ([v,x,hg],[x,h]) \label{analyticmor}
\end{align}
where the symbols $[\cdot, \cdot, \cdot]$ or $[\cdot, \cdot]$ denote the appropriate equivalence classes.

We get isomorphisms
\begin{equation*}
\theta: H^{\bullet}(S_{K_g}, [\cdot 1]^* \mu^K(V)) \simeq H^{\bullet}(S_{K_g}, [\cdot g]^* \mu^K(V)).
\end{equation*}
Moreover, we have canonical adjunction morphisms
\begin{equation*}
[\cdot 1]^* : H^{\bullet}(S_K, \mu^K(V)) \rightarrow H^{\bullet}(S_K, [\cdot 1]_* [\cdot 1]^* \mu^K(V)) \simeq H^{\bullet}(S_{K_g}, [\cdot 1]^* \mu^K(V))
\end{equation*}
and, using that $[\cdot g]$ is finite,
\begin{equation*}
[\cdot g]_* : H^{\bullet}(S_{K_g}, [\cdot g]^* \mu^K(V)) \simeq H^{\bullet}(S_K, [\cdot g]_* [\cdot g]^* \mu^K(V)) \rightarrow H^{\bullet}(S_K, \mu^K(V))
\end{equation*}
The following compatibility between the isomorphisms $\theta_g$ and the polarization coming from the PEL datum is then immediate from the definitions. 
\begin{lm} \label{comp_pairing}
Let $V$ be the standard representation of $G$. For $g \in G(\BA_f)$, denote by $\langle \cdot,\cdot \rangle_g$ the pairing on 
\[
[\cdot g]^* \mu^{K}_H (V)=[\cdot g]^* H^1(A_{K})^{\vee}
\]
obtained by functoriality from the pairing of \eqref{pairinghodge} on $\mu^K_H(V)=H^1(A_K)^{\vee}$.
Then, for any $g$, 
\[
\langle \cdot,\cdot \rangle = \langle \theta_g(\cdot), \theta_g(\cdot) \rangle_g
\]
as pairings on $\mu^{K_g}_H (V)=H^1(A_{K_g})^{\vee}$.
\end{lm}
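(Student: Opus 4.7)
The plan is to deduce the lemma from the naturality of the canonical construction in the representation. The pairing $\langle \cdot, \cdot \rangle$ on $\mu^{K'}_H(V) = H^1(A_{K'})^{\vee}$, for any level $K' \subseteq G(\BA_f)$, is nothing else than the image under the exact tensor functor $\mu^{K'}_H$ of the morphism of $G$-representations $V \otimes V \rightarrow \BQ(1)$ of \eqref{canpairing}. So the pairings $\langle \cdot, \cdot \rangle$ on $\mu^{K_g}_H(V)$ and $\langle \cdot, \cdot \rangle_g$ on $[\cdot g]^* \mu^K_H(V)$ can both be described as obtained from \eqref{canpairing} by applying, respectively, $\mu^{K_g}_H$ and $[\cdot g]^* \circ \mu^K_H$.

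The next step is to argue that $\theta_g$ is a natural transformation between the functors $\mu^{K_g}_H$ and $[\cdot g]^* \circ \mu^K_H$, i.e.\ natural in $V \in \Rep(G_F)$, and moreover that it is compatible with tensor products. This is straightforward from the explicit formula \eqref{analyticmor}: the map $[v,x,h] \mapsto ([v,x,hg],[x,h])$ acts only on the level coordinate $h$, leaving the representation-theoretic vector $v$ untouched, so for any morphism $f: V \rightarrow W$ in $\Rep(G_F)$ the induced map on total spaces commutes with $\theta_g$, and the formula on a tensor product $V \otimes W$ factors through $V$ and $W$ separately. The same analysis applies to $\theta_1$, hence to $\theta = \theta_g \circ \theta_1^{-1}$.

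Applying this naturality to the particular morphism $V \otimes V \rightarrow \BQ(1)$ yields a commutative square whose commutativity is exactly the identity
\[
\langle \cdot, \cdot \rangle = \langle \theta_g(\cdot), \theta_g(\cdot) \rangle_g
\]
of pairings on $\mu^{K_g}_H(V)$ (using that $\theta_g$ acts as the identity on the Tate-twist part $\mu^{K_g}_H(\BQ(1)) \simeq [\cdot g]^* \mu^{K}_H(\BQ(1))$, which is clear again from \eqref{analyticmor} applied to the one-dimensional representation $\BQ(1)$).

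I do not expect a real obstacle here: the lemma is essentially a bookkeeping verification, and the only point requiring a moment of care is the tensor-compatibility of $\theta_g$, which is immediate from the explicit analytic description but might otherwise be checked by passing to the universal cover $X$ where the local systems trivialize and $G(\BQ)$ acts diagonally, preserving the pairing by definition of $G$.
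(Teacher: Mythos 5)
Your proposal is correct, and it matches the paper's intent: the paper states the lemma with no proof beyond declaring it ``immediate from the definitions,'' and your argument is precisely the routine unwinding of that claim — both pairings arise by applying the respective exact tensor functors $\mu^{K_g}_H$ and $[\cdot g]^* \circ \mu^K_H$ to the $G$-equivariant map \eqref{canpairing}, and $\theta_g$ is a tensor-compatible natural transformation between them by the explicit formula \eqref{analyticmor}. The one small point worth flagging is your aside that ``$\theta_g$ acts as the identity on the Tate-twist part'': as stated this is circular (the identification $\mu^{K_g}_H(\BQ(1)) \simeq [\cdot g]^*\mu^K_H(\BQ(1))$ \emph{is} $\theta_g$), but what you actually use — that the naturality square for $V \otimes V \to \BQ(1)$ commutes and that both sides of the bottom row carry the same canonical Tate structure — is fine, so this is only a phrasing issue, not a gap.
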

 
\begin{dfi} \label{heckeop}
The \emph{Hecke operator} $T_{K,g} \in \End H^{\bullet}(S_K, \mu^K(V))$ \emph{associated to} $g \in G(\BA_f)$ is defined by 
\begin{equation} \label{cohom_hecke}
T_{K,g} := [\cdot g]_* \circ \theta \circ [\cdot 1]^*. 
\end{equation}
\end{dfi} 
 
We denote by $\mathcal{C}_c^{\infty}(G,K)_F$ the \emph{Hecke algebra} of compactly supported, smooth $F$-valued functions on $G(\BA_f)$ which are bi-invariant under $K$, with product given by convolution. It is generated by characteristic functions $\one_{KgK}$ with $g \in G(\BA_f)$. Definition \ref{heckeop} provides us with a morphism of $F$-algebras
\begin{align} \label{hecke_arrow}
\begin{split}
\mathcal{C}_c^{\infty}(G,K)_F & \rightarrow \End H^{\bullet}(S_K, \mu^K(V)) \\
\one_{KgK} & \mapsto T_{K,g}
\end{split}
\end{align}

\begin{dfi}
The \emph{cohomological Hecke algebra} (associated to $V$) is the $F$-subalgebra\footnote{The existence of the comparison isomorphism (after extending scalars appropriately) between Betti and étale $\ell$-adic cohomology, and the compatibility of the above-defined $\theta, [\cdot]^*$ and $[\cdot g]_*$ with the comparison isomorphism, imply that the definition, up to canonical isomorphism, is independent of the cohomology theory used.} 
\[
\mathcal{H}(G, K) \hookrightarrow
\End H^{\bullet}(S_K, \mu^K(V)) 
\]
defined as the image of the morphism \eqref{hecke_arrow}.
\end{dfi}

\begin{rem} \label{heckeintaction}
\item A Hecke algebra action on $H_c^{\bullet}(S,\mu^K(V))$ can be defined in an analogous way, so that the canonical map 
\[
H_c^{\bullet}(S,\mu^K(V)) \rightarrow H^{\bullet}(S,\mu^K(V))
\]
is equivariant with respect with the actions. Hence, we get a Hecke algebra action on $\mu^K(V)$-valued \emph{interior cohomology}, i.e. on
\begin{equation} \label{intcoh}
H_!^{\bullet}(S_K, \mu(V)):= \mbox{Im}(H_c^{\bullet}(S_K, \mu(V)) \rightarrow H^{\bullet}(S_K, \mu(V)))
\end{equation}
\end{rem}

\subsection{The motivic Hecke algebra}
Fix an element $g \in G(\mathbb{A}_f)$. There is a compact open subgroup $W$ of $V(\BA_f)$ such that the complex points of the universal abelian scheme $A_K \rightarrow S_K$ over $S_K$ can be written as
\begin{equation*}
A_K(\BC)=V(\BQ) \rtimes G(\BQ) \backslash V(\BR) \times X \times V(\BA_f) \rtimes G(\BA_f) / W \rtimes K
\end{equation*}
where the semidirect product is defined by the standard representation of $G$ on $V$.
Analogously, seeing $g$ as an element of $V(\BA_f) \rtimes G(\BA_f)$ and denoting $W_g:=W \cap gWg^{-1}$, the complex points of the universal abelian scheme $A_{K_g}$ over $S_{K_g}$ are given by
\begin{equation*}
A_{K_g}(\BC)=V(\BQ) \rtimes G(\BQ) \backslash V(\BR) \times X \times V(\BA_f) \rtimes G(\BA_f) / W_g \rtimes K_g
\end{equation*}
Now define the abelian schemes $A_{K,1}$ and $A_{K,g}$ over $S_{K_g}$ as the fiber products of $A_K$ and $S_{K_g}$ over $S_K$ along the morphisms, respectively, $[\cdot 1]$ and $[\cdot g]$. These objects and morphisms fit in the following diagram, where all subdiagrams commute and the two lower subdiagrams are cartesian:

\begin{center}
\begin{tabular}{c}
\xymatrix@C=2cm{
& A_{K_{g}} \ar[dl]_{f_1} \ar[d] \ar[dr]^{f_g} \\
A_{K,1} \ar[r] \ar[ddr] & S_{K_g} \ar@/_/[d]_{[\cdot 1]} \ar@/^/[d]^{[\cdot g]} &  A_{K,g} \ar[l] \ar[ddl] \\
& S_{K} \\
& A_K \ar[u]
}
\end{tabular}
\end{center}
The morphisms $f_1, f_g$ are isogenies; each $f_g$ is concretely described on $\BC$-points as
\begin{align}
A_{K_g}(\BC) & \rightarrow A_K(\BC) \times_{S_K(\BC),[ \cdot g]} S_{K_g}(\BC) \nonumber \\
[(v,x),(w,h)] & \mapsto ([(v,x),(wg,hg)],[(x,h)]) \label{analytichecke}
\end{align}
where square brackets denote the appropriate equivalence classes. 

\begin{def} \label{heckecorr}
Let $d$ denote the relative dimension of the abelian schemes under consideration. We define the morphism
\begin{equation} \label{phi_g}
\phi_g \in \Hom_{\DM_c(S_{K_g})}(h(A_{K,1}),h(A_{K,g}))=\CH^{d}(A_{K,1} \times_{S_{K_g}} A_{K,g})
\end{equation}
(cfr. \eqref{beilmorph}) as the (class of the) correspondence $\Gamma_{f_g} \circ \ ^t\Gamma_{f_1}$. 
\end{def}
The Chow-Künneth components are contravariantly functorial with respect to morphisms of abelian schemes, and functorial with respect to isogenies (Rem. \ref{propchowkun}.\ref{functchowkun},\ref{isochowkun}). Hence, we get from \eqref{phi_g} a morphism 
\begin{equation} \label{phi_g^1}
\phi^1_g : h^1( A_{K,1}) \rightarrow h^1(A_{K,g}),
\end{equation}

\begin{lm} \label{pullbh1}
There exist canonical isomorphisms
\begin{equation*}
[ \cdot 1]^* ( h^1(A_K)^{\otimes i} ) \simeq h^1(A_{K,1})^{\otimes i}
\end{equation*}
and 
\begin{equation*}
h^1(A_{K,g})^{\otimes i} \simeq [\cdot g]^* ( h^1(A_K)^{\otimes i} )
\end{equation*}
\end{lm}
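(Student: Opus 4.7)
The plan is to derive both isomorphisms from standard base-change properties in $\DM_c$; by symmetry of the two statements it suffices to treat the first, the second being obtained by substituting $[\cdot g]$ for $[\cdot 1]$ and $A_{K,g}$ for $A_{K,1}$. The key input is that $A_{K,1}$ is by definition the fiber product $A_K \times_{S_K,[\cdot 1]} S_{K_g}$, so that if $\pi : A_K \to S_K$ and $\pi' : A_{K,1} \to S_{K_g}$ denote the structural morphisms, the resulting square is cartesian, with $[\cdot 1]$ \'etale and $\pi$ proper.

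From there I would proceed in three steps. First, I would apply base change (either smooth base change along the \'etale morphism $[\cdot 1]$, or equivalently proper base change for the proper $\pi$) to $h(A_K) = \pi_* \un_{A_K}$ and use that the constant motive pulls back to the constant motive, producing a canonical isomorphism
\[
[\cdot 1]^* h(A_K) \simeq h(A_{K,1}).
\]
Second, I would observe that Deninger-Murre's Chow-K\"unneth projectors are compatible with base change: since multiplication by $n$ on $A_K/S_K$ pulls back to multiplication by $n$ on $A_{K,1}/S_{K_g}$, the characterizing equation \eqref{canchowkun} for $\Fp^i$ is stable under $[\cdot 1]^*$, and by the uniqueness part of that characterization this forces $[\cdot 1]^* \Fp^1_{A_K}$ to coincide with $\Fp^1_{A_{K,1}}$ under the isomorphism of the previous step. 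Cutting out the respective images then gives a canonical isomorphism $[\cdot 1]^* h^1(A_K) \simeq h^1(A_{K,1})$. Third, since $[\cdot 1]^*$ is a symmetric monoidal functor on $\DM_c$, it commutes with tensor powers, whence
\[
[\cdot 1]^* \bigl(h^1(A_K)^{\otimes i}\bigr) \simeq \bigl([\cdot 1]^* h^1(A_K)\bigr)^{\otimes i} \simeq h^1(A_{K,1})^{\otimes i}.
\]

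I do not expect any genuine obstacle, since all three ingredients (proper/smooth base change, compatibility of Chow-K\"unneth projectors with base change, and monoidality of pullback) are standard features of the six functor formalism reviewed in Section \ref{chowbeilrev}. The only point requiring a little care is the step involving the projectors, which relies on the fact that the graph of multiplication by $n$ in $\CH^d(A_K \times_{S_K} A_K)$ has a well-behaved pullback along the \'etale morphism induced by $[\cdot 1]$; this is however immediate from flat pullback of algebraic cycles, and ensures that the isomorphisms constructed above are indeed canonical.
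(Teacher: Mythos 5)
Your proof is correct and follows essentially the same route as the paper's: proper base change for $h(A_K)$, compatibility of the Chow-K\"unneth projectors with pullback via the characterizing equation \eqref{canchowkun}, and monoidality of the pullback functors. You simply make explicit the small verifications (multiplication by $n$ commutes with base change; uniqueness forces the projectors to match) that the paper leaves to the reader.
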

\begin{proof}
By proper base change, we have canonical isomorphisms 

\begin{equation} \label{bc1}
[ \cdot 1]^* h(A_K) \simeq h(A_{K,1})
\end{equation}
and
\begin{equation} \label{bc2}
h(A_{K,g}) \simeq [\cdot g]^* h(A_K)
\end{equation}
Since the characterization \eqref{canchowkun} of Chow-Künneth projectors shows immediately that the Chow-Künneth components are compatible with pullback, and since the functors $[ \cdot 1]^*$, $[\cdot g]^*$ are monoidal, these isomorphisms induce the isomorphisms in the statement. 
\end{proof}

\begin{dfi} \label{phi_g^1i}
For any positive integer $i$, we define the morphism
\begin{equation} \label{relhecke}
\phi^{1,i}_g : [ \cdot 1]^* ( h^1(A_K)^{\otimes i} )  \rightarrow [ \cdot g]^* (h^1(A_{K})^{\otimes i}).
\end{equation}
as the one obtained from the morphism induced by \eqref{phi_g^1}
\begin{equation} \label{tensor_phi}
(\phi^1_g)^{\otimes i} : h^1(A_{K,1})^{\otimes i} \rightarrow h^1(A_{K,g})^{\otimes i}
\end{equation}
by composing with the isomorphisms of Lemma \ref{pullbh1}. 
\end{dfi}

\begin{prop} \label{phi_g^e}
Let $A_K \rightarrow S_K$ be the universal abelian scheme over a PEL Shimura variety with underlying group $G$. For any idempotent element $e$ of the algebra $\CB_{i,F}$ of Definition \ref{Lef_alg}, call $\CN^e$ the corresponding direct factor in $\CHM^s(S_K)_F$
\[
\CN^e \hookrightarrow h^1(A_K)^{\otimes i}
\]
Then for any $g \in G(\BA_f)$, the morphism $\phi_g^{1,i}$ of Definition \ref{phi_g^1i} induces a morphism 
\[
\phi^e_g : [ \cdot 1]^* \CN^e \rightarrow [ \cdot g]^* \CN^e
\]
\end{prop}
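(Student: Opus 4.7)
The plan is to prove the stronger intertwining relation
\[
\phi^{1,i}_g \circ [\cdot 1]^*(e) \;=\; [\cdot g]^*(e) \circ \phi^{1,i}_g,
\]
where $e$ is viewed on each side through the isomorphisms of Lemma \ref{pullbh1} together with the monoidality of the pullback functors (which, by direct inspection, send the three families of generators of $\CB_{i,F}$ listed in Definition \ref{Lef_alg} bijectively onto the analogous generators of the algebras attached to $A_{K,1}$ and $A_{K,g}$ over $S_{K_g}$). Granting this identity, $\phi^e_g$ is obtained simply by restricting $\phi^{1,i}_g$ to the direct summands cut out by these projectors on each side. Because $e$ is a polynomial in the generators, it suffices to verify the commutation on each generator separately.

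The verification for permutations $\sigma \in \mathfrak{S}_i$ is automatic from the definition of $\phi^{1,i}_g$ as the $i$-th tensor power $(\phi^1_g)^{\otimes i}$ (Definition \ref{phi_g^1i}). For a generator $b \otimes \Id^{\otimes (i-1)}$ coming via Proposition \ref{thm_kings} from $b \in B$, it suffices to check that $\phi^1_g$ is $B$-equivariant on $h^1$. Since the $B$-action on $A_{K,1}$, $A_{K,g}$ and $A_{K_g}$ is the one inherited from their respective PEL moduli descriptions, and the isogenies $f_1, f_g$ arise from the universal property of $A_K$ relative to those data --- this is visible in the explicit formula \eqref{analytichecke}, whose semidirect product only uses the $G$-action on $V$, which commutes with $B$ by the very definition of $G$ --- both $f_1^*$ and $(f_g)_*$ on $h^1$ are $B$-equivariant, and hence so is their composition $\phi^1_g$.

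The main case is the polarization generator $P \otimes \Id^{\otimes (i-2)}$. Writing $P = \frac{1}{2d} \iota \circ p$, the adjunction formulas of Lemma \ref{adjformulae} reduce the intertwining statement for $\phi^1_g \otimes \phi^1_g$ and $P$ to a single motivic \emph{isometry} identity
\[
p_{[\cdot 1]} \;=\; p_{[\cdot g]} \circ (\phi^1_g \otimes \phi^1_g) \quad \text{in} \quad \Hom_{\CHM^s(S_{K_g})_F}\!\bigl(h^1(A_{K,1})^{\otimes 2},\, \BL\bigr),
\]
where $p_{[\cdot 1]}$, $p_{[\cdot g]}$ denote the pullbacks of the pairing $p$ of \eqref{p} along $[\cdot 1]$ and $[\cdot g]$ respectively. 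This is the Chow-motivic counterpart of the sheaf-theoretic compatibility recorded in Lemma \ref{comp_pairing} and is the technical heart of the proof: it encodes the fact that the Hecke correspondence, arising from the moduli interpretation of $S_K$ as polarized abelian varieties, transports one pullback polarization onto the other. I expect this to be the main obstacle. It should admit two complementary proofs: \emph{geometrically}, by a direct check on the moduli-theoretic polarizations and the isogenies $f_1, f_g$ (using that $g \in G(\BA_f)$ preserves the PEL pairing up to a rational scalar, absorbed in the normalisation $\tfrac{1}{2d}$); or, \emph{tannakianly}, via Ancona's Theorem \ref{constmot} and Kings' Theorem \ref{thm_kings}, which ensure that on the relevant endomorphism algebra the Hodge realization $\rho_H$ is faithful, allowing one to reduce the identity to its sheaf-theoretic avatar in Lemma \ref{comp_pairing}.
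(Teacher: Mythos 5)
Your proposal follows essentially the same route as the paper: reduce to commutation with the three families of generators of $\CB_{i,F}$, dispose of $\mathfrak{S}_i$ and the $B$-action via the definition of $\phi^{1,i}_g$ as a tensor power and the explicit analytic formula \eqref{analytichecke}, and handle the polarization projector $P$ by reducing, via the adjunction formulae of Lemma \ref{adjformulae}, to a single motivic compatibility between $\phi^1_g$ and the pairing, which is then checked after Hodge realization using the faithfulness coming from Kings' isomorphism \eqref{isoEndH1} and Deligne's equivalence (Prop.~\ref{abelhodge}), landing on Lemma \ref{comp_pairing}. Your ``isometry identity'' $p_{[\cdot 1]} = p_{[\cdot g]} \circ (\phi^1_g \otimes \phi^1_g)$ is, via \eqref{adj}, exactly the commutativity of the paper's diagram \eqref{polcomm}, and the ``tannakian'' route you indicate is the one the paper takes; the only thing you leave implicit is the explicit chain of applications of Lemma \ref{adjformulae} that the paper writes out to pass from \eqref{polcomm} to the commutation of $\iota\circ p$ with $\phi^1_g\otimes\phi^1_g$.
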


\begin{proof}

Let $g \in G(\BA_f)$. By functoriality, the algebra $\CB_{i,F}$ acts on both $[\cdot 1]^*h^1(A_{K})^{\otimes i}$ and $[\cdot g]^*h^1(A_{K})^{\otimes i}$. Hence, through the isomorphisms of Lemma \ref{pullbh1}, we can see $\CB_{i,F}$ as an algebra of endomorphisms of both $h^1(A_{K,1})^{\otimes i}$ and $h^1(A_{K,g})^{\otimes i}$. With these conventions, the desired statement will be proven once we will show the following equality of morphisms in $\CHM^s(S_{K_g})$ 
\begin{equation*}
(\phi_g^{1})^{\otimes i} \circ e = e \circ (\phi_g^{1})^{\otimes i}
\end{equation*}
with $(\phi^1_g)^{\otimes i}$ being the morphism 
\[
(\phi^1_g)^{\otimes i} : h^1(A_{K,1})^{\otimes i} \rightarrow h^1(A_{K,g})^{\otimes i}
\]
of Eq. \eqref{tensor_phi}. To see this, it will be enough to show that for any $\beta$ in the explicit set of generators of $\CB_{i,F}$ provided by Def. \ref{Lef_alg}, the commutation equality 
\[
(\phi_g^{1})^{\otimes i} \circ \beta = \beta \circ (\phi_g^{1})^{\otimes i}
\]
holds.

Commutativity of $(\phi_g^{1})^{\otimes i}$ with elements $\beta$ of the symmetric group $\mathfrak{S}_i$ is clear. Hence, let us fix $b \in B$ and turn our attention to commutativity with 
\[
\beta=b \otimes \Id^{\otimes i-1}_{h^1(A_{K,\square})}
\]
with $\square$ being either equal to $1$ or to $g$. Let us simultaneously see the algebra $B$ as an algebra of endomorphisms of each of the three objects $h^1(A_{K,1})$, $h^1(A_{K,g})$ and $h^1(A_{K_g})$. With this convention, given the definition of $\phi_g$ in \eqref{phi_g}, the desired commutativity relation will follow once we prove that $b$ commutes with $^t\Gamma_{f_1}$ and $\Gamma_{f_2}$. The morphisms $f_1$ and $f_g$ being isogenies, we have that $b$ commutes with $^t\Gamma_{f_1}$, $ ^t\Gamma_{f_g}$ if and only if it commutes with $\Gamma_{f_1}$, $\Gamma_{f_g}$. But the validity of the latter assertion is immediate from the analytical description \eqref{analytichecke} of the morphisms $f_1$, $f_g$. 

Finally, let again $\square$ be either equal to $1$ or to $g$ and let us show commutativity of $(\phi_g^{1})^{\otimes i}$ with $\beta =P \otimes \Id^{\otimes i-2}_{\Fh^1(A_{K,\square})}$ and thus conclude the proof. We will show that the projector $P$ (as defined in Def. \ref{Lef_alg}) commutes with $(\phi_g^1)^{\otimes 2}$. By functoriality and by the isomorphisms of Lemma \ref{pullbh1}, the isomorphism \eqref{I} induces isomorphisms
\[
I : h^1(A_{K,\square}) \simeq h^1(A_{K,\square})^{\vee}(-1)
\]
and hence, by adjunction, we get morphisms 
\[
p: h^1(A_{K,\square}) \otimes  h^1(A_{K, \square}) \rightarrow \BL
\]
and 
\[
\iota: \BL \rightarrow h^1(A_{K,\square}) \otimes h^1(A_{K, \square})
\]
analogously to \eqref{p}, \eqref{i}. 
With these slightly abusive notations, we have to show the identity 
\begin{equation} \label{commprojpol}
\iota \circ p \circ (\phi_g^1 \otimes \phi_g^1) = (\phi_g^1 \otimes \phi_g^1) \circ \iota \circ p
\end{equation}
To this end, we first claim that the diagram
\begin{center}
\begin{align}
\xymatrix{
h^1(A_{K,1}) \ar[d]_{\phi_g^1} \ar[r]^-{I}_-{\simeq} & h^1(A_{K,1})^{\vee} (-1) \\
h^1(A_{K,g}) \ar[r]^-{I}_-{\simeq} & h^1(A_{K,g})^{\vee} (-1) \ar[u]_{(\phi_g^1)^{\vee} (-1)}
}
\label{polcomm}
\end{align}
\end{center}
commutes. Granting this for a moment, we have that, by definition of $\iota$ and $p$, and by remembering the isomorphism $adj$ from \eqref{adj},
\begin{align*}
& \iota \circ p \circ (\phi_g^1 \otimes \phi_g^1) \quad = \\
= \quad & adj(I^{-1}) \circ adj^{-1}(I) \circ \phi_g^1 \otimes \phi_g^1 = \\
& \mbox{(by commutativity of \eqref{polcomm})} \\
= \quad & adj (\phi_g^1 \circ I^{-1} \circ (\phi_g^1)^{\vee} (-1)) \circ adj^{-1}(I) \circ \phi_g^1 \otimes \phi_g^1 \quad = \\
& \mbox{(by Lemma \ref{adjformulae} in its instance \ref{f1}, choosing} \ \mu=\phi_g^1 \ \mbox{and} \ \lambda=I^{-1} \circ (\phi_g^1)^{\vee} (-1) ) \\
= \quad & (\Id_{h^1(A_{K,g})} \otimes \phi_g^1) \circ adj(I^{-1} \circ (\phi_x^1)^{\vee} (-1)) \circ adj^{-1}(I) \circ \phi_g^1 \otimes \phi_g^1 \quad = \\
& \mbox{(by Lemma \ref{adjformulae} in its instance \ref{f2}, choosing} \ \mu=I^{-1} \ \mbox{and} \ \lambda = (\phi_g^1)^{\vee}) \\
= \quad & (\Id_{h^1(A_{K,g})} \otimes \phi_g^1) \circ (\phi_g^1 \otimes \Id_{h^1(A_{K,1})}) \circ adj(I^{-1}) \circ adj^{-1}(I) \circ \phi_g^1 \otimes \phi_g^1 \quad = \\
& (\mbox{by factorizing} \ \phi^1_g \otimes \phi^1_g = (\Id_{h^1(A_{K,g})} \otimes \phi_g^1)) \circ (\phi_g^1 \otimes \Id_{h^1(A_{K,1})}) \\
= \quad & \phi^1_g \otimes \phi^1_g \circ \iota \circ adj^{-1}(I) \circ (\Id_{h^1(A_{K,g})} \otimes \phi_g^1) \circ (\phi_g^1 \otimes \Id_{h^1(A_{K,1})}) \quad = \\
& \mbox{(by Lemma \ref{adjformulae} in its instance \ref{f3}, choosing} \ \mu=I \ \mbox{and} \ \lambda= \Id_{h^1(A_{K,g})} \otimes \phi_g^1 ) \\
\end{align*}
\begin{align*}
= \quad & \phi^1_g \otimes \phi^1_g \circ \iota \circ adj^{-1}(I \circ \phi_g^1) \circ (\phi_g^1 \otimes \Id_{h^1(A_{K,1})}) \quad = \\
& \mbox{(by Lemma \ref{adjformulae} in its instance \ref{f4}, choosing} \ \mu=(\phi_g^1)^{\vee} \ \mbox{and} \ \lambda = I \circ \phi^1_g) \\
= \quad & \phi^1_g \otimes \phi^1_g \circ \iota \circ adj^{-1}( (\phi_g^1)^{\vee} (-1) \circ I \circ \phi_g^1) \quad = \\
& \mbox{(by commutativity of \eqref{polcomm})} \\
= \quad & \phi^1_g \otimes \phi^1_g \circ \iota \circ adj^{-1}(I) \quad = \\
= \quad & (\phi_g^1 \otimes \phi_g^1) \circ \iota \circ p.
\end{align*}

So, identity \eqref{commprojpol} is proven, and it remains only to justify the commutativity of \eqref{polcomm}. This can be checked by applying a pullback and working in the category of smooth Chow motives over $S_{K_g, \BC}$, where, as a consequence of the isomorphism \eqref{isoEndH1} and of the equivalence of categories of Prop. \ref{abelhodge}, along with Remarks \ref{dualab_hodge} and \ref{dualab_mot}, we can equivalently show the commutativity of the corresponding diagram in the category of polarizable variations of $\BQ$-Hodge structure on $S_{K_x}(\BC)$, obtained by realization. It is clear by the analytical description \eqref{analytichecke} of $f_1$, $f_g$ that under the base change isomorphisms 
\[
[\cdot 1]^* H^1 (A_K) \simeq H^1(A_{K,1}), \ \ [\cdot g]^* H^1 (A_K) \simeq H^1(A_{K,g})
\]
the Hodge realization $\rho=\rho_H$ sends the morphisms of motives $^t f_1$, $f_g$ to the isomorphisms of sheaves $\theta_1^{-1}$, resp. $\theta_g$ defined analytically in Eq. \eqref{analyticmor}. With these identifications being understood, our task amounts then to showing the commutativity of the following diagrams of variations of Hodge structure on $S_{K_g}(\BC)$:
\[
\begin{array}{cc}
\xymatrix{
H^1(A_{K,1}) \ar[d]_{\theta^{-1}_1} \ar[r]^-{\rho(I)}_-{\simeq} & H^1(A_{K,1})^{\vee} (-1) \\
H^1(A_{K_g}) \ar[r]^-{\rho(I)}_-{\simeq} & H^1(A_{K_g})^{\vee} (-1) \ar[u]_{(\theta^{-1}_1)^{\vee} (-1)}
} 

& 

\xymatrix{
H^1(A_{K_g}) \ar[d]_{\theta_g} \ar[r]^-{\rho(I)}_-{\simeq} & H^1(A_{K_g})^{\vee} (-1) \\
H^1(A_{K,g}) \ar[r]^-{\rho(I)}_-{\simeq} & H^1(A_{K,g})^{\vee} (-1) \ar[u]_{(\theta_g)^{\vee} (-1)}
}

\end{array}
\]
Since $\rho(I)$ corresponds under adjunction to (the base change of) the pairing $\langle \cdot, \cdot \rangle$ of \eqref{pairinghodge}, the desired commutativity follows from Lemma \ref{comp_pairing}.  
\end{proof}

\begin{cor} \label{phi_V}
Let $V$ be any object in $\Rep_F(G)$ and consider the object $\tilde{\mu}(V)$ in $\CHM^s(S_K)_F$, where $\tilde{\mu}$ is the motivic canonical construction functor of Theorem \ref{motcanconstr}. 

Then, for any $g \in G(\BA_f)$, there exists a canonical morphism in $\CHM^s(S_{K_g})_F$
\[
\phi_g : [ \cdot 1]^* \tilde{\mu}(V) \rightarrow [ \cdot g]^* \tilde{\mu}(V)
\]
whose realizations coincide with the morphism $\theta$ of Eq. \eqref{theta}.
\end{cor}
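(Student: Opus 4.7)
The plan is to reduce the corollary to Proposition \ref{phi_g^e} by expressing any $V \in \Rep_F(G)$ as a direct summand of a tensor power of $V^\vee$, up to Tate twist, and transporting the resulting description through the monoidal functor $\tilde{\mu}$. By \eqref{candual} one has $V^\vee \simeq V(-1)$ in $\Rep_F(G)$, and as recalled in Remark \ref{factpowab}, $V \oplus V^\vee$ generates $\Rep(G)$ as a Tannakian category; combining these facts, one produces an integer $n$, a positive integer $i$, and an idempotent $\tilde{e} \in \End_{\Rep(G_F)}((V^\vee)^{\otimes i})$ whose image is isomorphic to $V(-n)$. Under the algebra isomorphism $\End_{\Rep(G_F)}((V^\vee)^{\otimes i}) \simeq \CB_{i,F}$ of Theorem \ref{constmot}, $\tilde{e}$ corresponds to an idempotent $e \in \CB_{i,F}$; since $\tilde{\mu}$ is monoidal, commutes with Tate twists, and satisfies $\tilde{\mu}(V^\vee) = h^1(A_K)$, the object $\tilde{\mu}(V)(-n)$ is identified with the direct factor $\CN^e$ of $h^1(A_K)^{\otimes i}$. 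Applying Proposition \ref{phi_g^e} to $e$ and Tate-twisting by $n$ then produces the desired morphism $\phi_g$.

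For the realization statement, I would use the fact that, by construction, $\phi^1_g$ in \eqref{phi_g^1} is built from the isogenies $f_1, f_g$ whose Hodge (resp.\ $\ell$-adic) realizations are precisely the isomorphisms $\theta_1^{-1}$ and $\theta_g$ of \eqref{analyticmor}; this identification is already contained in the last paragraph of the proof of Proposition \ref{phi_g^e}. Monoidality of the realization functors propagates this to $(\phi^1_g)^{\otimes i}$, hence via Lemma \ref{pullbh1} to $\phi^{1,i}_g$, and restriction to the direct factor $\CN^e$ followed by the appropriate Tate twist recovers $\theta$ on $\mu^K(V)$, using $\rho \circ \tilde{\mu} \simeq \mu^K$ together with the compatibility of $\mu^K$ with tensor operations, direct summands, and Tate twists.

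I expect the main subtlety to be \emph{canonicity}, i.e.\ independence of $\phi_g$ from the choice of presentation $(i,n,\tilde{e})$. The essential mechanism is exactly the one established in the proof of Proposition \ref{phi_g^e}: since $\phi^{1,i}_g$ commutes with every element of $\CB_{i,F}$, the induced morphism between direct summands depends only on $\CN^e$ as a subobject of $h^1(A_K)^{\otimes i}$, and not on the particular idempotent $\tilde{e}$ cutting it out. Compatibility under changes of $(i,n)$ is controlled by the tensor identity $\phi^{1,i+j}_g = \phi^{1,i}_g \otimes \phi^{1,j}_g$ (immediate from the definition of $\phi^{1,i}_g$ as $(\phi^1_g)^{\otimes i}$ conjugated by the isomorphisms of Lemma \ref{pullbh1}) together with compatibility with Tate twists. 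A clean way to encapsulate all of this is to verify that the $\phi_g$'s assemble into a natural transformation $[\cdot 1]^* \circ \tilde{\mu} \Rightarrow [\cdot g]^* \circ \tilde{\mu}$ between functors $\Rep_F(G) \to \CHM^s(S_{K_g})_F$, from which both canonicity and the realization compatibility follow at once.
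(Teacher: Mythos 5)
Your proof is correct and rests on the same key input as the paper's, namely Proposition \ref{phi_g^e}, but the reduction step is routed slightly differently. The paper invokes Remark \ref{factpowab} to write $\tilde{\mu}(V)$ as a direct sum of Tate twists of direct factors of motives $h(A_K^{r})$, passes through the Chow--K\"unneth decomposition and the algebras $\CB_{i,r,F}$, and (implicitly) extends Proposition \ref{phi_g^e} from $\CB_{i,F}$ to $\CB_{i,r,F}$ via the identification $(h^1(A_K^r))^{\otimes i} \cong (h^1(A_K)^{\otimes i})^{\oplus r^i}$. You instead go directly: using $V^{\vee} \simeq V(-1)$ and the Tannakian generation of $\Rep(G)$ by the standard representation and its dual, any representation is a direct summand of a (direct sum of) Tate twist(s) of $(V^{\vee})^{\otimes i}$, cut out by an idempotent $\tilde{e}$, and Theorem \ref{constmot} transports $\tilde{e}$ to an idempotent $e \in \CB_{i,F}$ so that Proposition \ref{phi_g^e} applies verbatim. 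This bypasses $A_K^r$ and $\CB_{i,r,F}$ altogether, which is a mild streamlining; note that both routes must, and do, keep track of compatibility with the identification $h^1(A_K) \simeq h^1(A_K)^{\vee}(-1)$ (the paper addresses this via the commutativity of \eqref{polcomm}, while in your version it is subsumed in the use of Theorem \ref{constmot}, since the monoidal functor $\tilde{\mu}$ already encodes this compatibility). One small imprecision: a general object of $\Rep_F(G)$ need not be a direct summand of a single Tate twist of a single $(V^{\vee})^{\otimes i}$, only of a finite direct sum of such; this is handled by splitting into irreducible summands and taking the direct sum of the corresponding $\phi_g$'s, exactly as the paper does. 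Your final paragraph on canonicity, phrased as naturality of $\phi_g$ in $V$, is a welcome clarification that the paper leaves implicit.
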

\begin{proof}
By Remark \ref{factpowab}, the object $\tilde{\mu}(V)$ is a direct sum of Tate twists of direct factors of motives $h(A_K^{r})$. By the Chow-Künneth decomposition \eqref{CKdec} of the abelian scheme $A_K^r$, we may suppose that each such direct factor is cut out by an idempotent in the subalgebra $\CB_{i,r,F}$ of $\CB_{i,F}^{\oplus r}$ of Definition \ref{Lef_alg^r} (for a suitable $i$). For each such idempotent $e$, Proposition \ref{phi_g^e} provides morphisms $\phi^e_g$, whose direct sum gives the desired morphism $\phi_g$ as in the statement (notice that the commutativity of diagram \eqref{polcomm} assures the compatibility of $\phi_g$ with the identifications $h^1(A_K) \simeq h^1(A_K)^{\vee}(-1)$ being made to get the conclusion of Remark \ref{factpowab}). The statement about realizations comes from the assertions on realizations of the morphisms $^t f_1$, $f_g$ in the last part of the proof of Proposition \ref{phi_g^e}. 
\end{proof}

Let $g \in G(\BA_f)$ and $V \in \Rep_F(G)$. We have adjunction morphisms in $\CHM^s(S_K)_F$
\[
a_1: \tilde{\mu}(V) \rightarrow [\cdot 1]_{*} [ \cdot 1]^* \tilde{\mu}(V), \ \ a_g: [ \cdot g]^* [\cdot g]_{*} \tilde{\mu}(V) = [ \cdot g]^! [\cdot g]_{!} \tilde{\mu}(V) \rightarrow \tilde{\mu}(V)
\]
Let $s:S_K \rightarrow \Spec E$ denote the structure morphism and apply $s_*$ to $a_1$ and $a_g$. Then one gets morphisms in $\DM_c(E)_F$
\begin{equation} \label{heckepart1}
s_*(a_1):s_* \tilde{\mu}(V) \rightarrow (s \circ [\cdot 1])_* [ \cdot 1]^* \tilde{\mu}(V), \ s_*(a_g): (s \circ [\cdot g])_* [\cdot g]^* \tilde{\mu}(V) \rightarrow s_* \tilde{\mu}(V)
\end{equation}
By applying $(s \circ [\cdot 1])_*=(s \circ [\cdot g])_*$ to the morphism $\phi_g$ of Corollary \ref{phi_V}, one gets another morphism in $\DM_c(E)_F$
\begin{equation} \label{heckepart2}
(s \circ [\cdot 1])_*(\phi_g): (s \circ [\cdot 1])_* [\cdot 1]^* \tilde{\mu}(V) \rightarrow (s \circ [\cdot g])_* [\cdot g]^* \tilde{\mu}(V)
\end{equation}

\begin{dfi} \label{mothecke} Let $V \in \Rep_F(G)$. The \emph{Hecke correspondence} on $s_* \tilde{\mu}(V)$ associated to $g \in G(\BA_f)$ is the element $KgK \in \End_{\DM_c(E)_F}(s_* \tilde{\mu}(V))$ defined by
\begin{equation}
KgK:= s_*(a_g) \circ (s \circ [\cdot 1])_*(\phi_g) \circ s_*(a_1) : s_* \tilde{\mu}(V) \rightarrow s_* \tilde{\mu}(V). 
\end{equation}
\end{dfi}

\begin{dfi} \label{motheckealg}
The \emph{motivic Hecke algebra} $\mathcal{H}^{M}(G, K)$ is the subalgebra
\[
\mathcal{H}^{M}(G, K) \hookrightarrow \End_{\DM_c(E)_F}(s_* \tilde{\mu}(V))
\]
generated by the correspondences $KgK$ of Definition \ref{mothecke}.
\end{dfi}

We are then ready to formulate, and to easily deduce, our main result.
\begin{thm} \label{realhecke}
The motivic Hecke algebra $\mathcal{H}^{M}(G, K)$ of Definition \ref{motheckealg} enjoys the following property. Any realization functor $\rho$ induces an $F$-algebra epimorphism 
\[
\rho : \mathcal{H}^{M}(G, K) \twoheadrightarrow \mathcal{H}(G, K)
\]
\end{thm}
\begin{proof}
It is an immediate consequence of the definitions, of the compatibility of realizations with the six functors, and of Corollary \ref{phi_V}. 
\end{proof}

\begin{rem} \label{morehecke}
\begin{enumerate}[wide, labelwidth=!, labelindent=0pt, label=(\arabic*)]
\item A motivic Hecke algebra action on $s_! \tilde{\mu}(V)$ can be defined in an analogous way, so that the canonical map $s_! \tilde{\mu}(V) \rightarrow s_* \tilde{\mu}(V)$ is equivariant with respect to the Hecke actions.
\item \label{differences} Let $s:S_K \rightarrow \Spec E$ be \emph{any} Shimura variety (not necessarily of PEL type) and let $M_K$ be a \emph{mixed Shimura variety} (\cite[Def. 3.1]{Pin90}) equipped with a morphism $M_K \rightarrow S_K$ giving to $M_K$ the structure of an abelian scheme over $S_K$. Given any direct factor $\CN$ of $h_{S_K}(M_K)$ in $\CHM(S_K)_F$, an endomorphism $KgK \in \End_{\DM_c(E)_F}(s_*\CN)$ is defined in \cite[pp. 591-592]{Wil17} for any $g \in G(\BA_f)$, by a formula formally identical to the one of Definition \ref{mothecke}, up to replacing our morphism $\phi_g$ by a morphism $\tilde{\phi}_g$ (the notation is ours) defined in a different way. 

Let us explain the difference by looking at the basic case of interest here, i.e. $M_K$ being equal to a power $A^r_K$ of the universal abelian scheme $A_K$ over a PEL Shimura variety $S_K$. There is a morphism $\phi_g : [\cdot 1]^* h(A^r_K) \rightarrow [\cdot g]^* h(A^r_K)$ defined analogously to \eqref{phi_g}. Then $\tilde{\phi}_g$ is obtained by pre-composing $\phi_g$ with the inclusion $[\cdot 1]^* \CN \hookrightarrow [\cdot 1]^*h(A^r_K)$ and post-composing it with the projection $[\cdot g]^*h(A^r_K) \twoheadrightarrow [\cdot g]^* \CN$. One could define the motivic Hecke algebra as the one generated by correspondences $KgK$ defined in this way; suppose one does this, obtaining algebras $\mathcal{H}^M(G,K,\mathcal{N})$ for each $\mathcal{N}$. Then, the following problem would arise. Choose any realization $\rho$ and for the sake of this remark, keep, for the cohomological Hecke algebra as well, the notation $\mathcal{H}(G,K,\rho(\mathcal{N}))$ remembering the coefficient system $\rho(\mathcal{N})$. There is an idempotent $e$ cutting out $\CN$ in $h(A_K^r)$, inducing an idempotent $\mathfrak{E}:=s_*(e)$ acting on the absolute motive $s_* h(A_K^r)$ and on its cohomology. Then, $\rho$ induces an algebra epimorphism 
\[
\rho: \mathcal{H}^M(G,K,\CN) \twoheadrightarrow \mathfrak{E} \circ \mathcal{H}(G,K,H^{\bullet}(A_K^r)) \circ \mathfrak{E}
\]
but there is a priori no reason for the arrow 
\[
\mathfrak{E} \circ \mathcal{H}(G,K,H^{\bullet}(A_K^r)) \circ \mathfrak{E} \rightarrow \mathcal{H}(G,K,\rho(\mathcal{N}))
\] 
to even be an \emph{algebra homomorphism}.
\end{enumerate}
\end{rem}

\section{The Hecke action on interior motives of PEL Shimura varieties}\label{hecke_interior}

We keep the setting of the previous section. In particular, $s:S_K \rightarrow \Spec E$ is a PEL Shimura variety of underlying group $G$ and for any $V \in \Rep_F(G)$, we have the smooth Chow motive $\tilde{\mu}(V)$ over $S_K$ of Theorem \ref{motcanconstr} at our disposal. We will denote by $\mu$ either the Hodge (Def. \ref{Hodgecan}) or the $\ell$-adic (Def. \ref{lcan}) canonical construction functors on $S_K$. 

In \cite{Wil19}, Wildeshaus has shown that under certain conditions, it is possible to construct a \emph{functorial} factorization of the canonical morphism $s_! \tilde{\mu}(V) \rightarrow s_* \tilde{\mu}(V)$ in $\DM_c(E)_F$
\begin{center}
\begin{tabular}{c}
\xymatrix{
& \Gr_0(s_* \tilde{\mu}(V)) \ar[dr] & \\
s_! \tilde{\mu}(V) \ar[ur] \ar[rr] & & s_* \tilde{\mu}(V)
}
\end{tabular}
\end{center}
in such a way that $\Gr_0(s_* \tilde{\mu}(V))$ is a \emph{Chow} motive, called \emph{interior motive} of $S_K$ with values in $\tilde{\mu}(V)$. Its name comes from the fact that it realizes to $\mu(V)$-valued \emph{interior cohomology} of $S_K$ \eqref{intcoh}.

The aim of this final section is twofold. First, we explain that when it exists, the interior motive carries an action of the motivic Hecke algebra constructed in the previous section. Second, we briefly review a list of cases in which the existence of the interior motive is indeed known, so that our main result applies.  

Let us start by recalling the principle of construction, by adopting the point of view of \emph{intersection motives}. This requires a choice of compactification of $S_K$, which we take to be the \emph{Baily-Borel} compactification $S_K^*$ (see e.g. \cite[(3.5)]{Pin92}), a projective variety over $E$ equipped with morphisms
\[
j : S_K \hookrightarrow S_K^* \hookleftarrow \partial S_K^* : i
\]
with $j$ the open, dense immersion of $S_K$, and $i$ the closed immersion of the \emph{boundary} $\partial S_K^*$. 
\begin{dfi} \label{avoidweight}
For any object $M \in \DM_c(S_K)_F$, we define $\partial M$ as the constructible motive
\[
\partial M:= i^* j_* M \in \DM_c(\partial S_K^*)_F
\]
The full subcategory 
\[
\CHM(S_K)_{F,\partial w \neq 0,1}
\]
of $\CHM(S_K)_F$ is defined as having objects those $M \in \CHM(S_K)_F$ such that $\partial M$ \emph{avoids weights 0 and 1}, in the sense of the motivic weight structure on $\DM_c(\partial S_K^*)_F$.
\end{dfi} 

\begin{rem} \label{inters_mot}
\begin{enumerate}[wide, labelwidth=!, labelindent=0pt, label=(\arabic*)]
\item \label{ab_mot} By combining \cite[Def. 2.4, Rem. 2.6(d)]{Wil19} with \cite[Thm. 7.2]{Wil17}, one knows that there exists a functor 
\[
j_{!*} : \CHM(S_K)_{F,\partial w \neq 0,1} \rightarrow \CHM(S_K^*)_F
\]
enjoying in particular the following property. Consider the full subcategory 
\[
\CHM(S_K)^{\ab}_{F} \hookrightarrow \CHM(S_K)_F
\]
consisting of those relative Chow motives which are the pullback to $S_K$ of a motive \emph{of abelian type} over $S_K^*$ (\cite[Def. 4.1]{Wil19}). Then, the restriction of $j_{!*}$ to the intersection of $\CHM(S_K)_{F,\partial w \neq 0,1}$ with $\CHM(S_K)^{\ab}_{F}$ realizes to the \emph{intermediate extension} functor defined in the theory of \emph{perverse sheaves}. We refer to \cite[Thm. 7.2]{Wil17} for the precise formulation of the latter realization statement. Note that by \cite[Thm. 8.5]{Wil17}, the objects $\tilde{\mu}(V)$ belong to $\CHM(S_K)^{\ab}_{F}$ for any $V$ in $\Rep_F(G)$.
\item \label{pure_mot}
By the properties of the motivic weight structure, pushforward along proper morphisms sends weight zero objects to weight zero objects. Hence, if $\bar{s} : S_K^* \rightarrow \Spec E$ denotes the structure morphism, the functor $j_{!*}$ of the previous point allows us to define a functor 
\[
\bar{s}_* \circ j_{!*} : \CHM(S_K)_{F,\partial w \neq 0,1} \rightarrow \CHM(E)_F
\]
\end{enumerate}
\end{rem}

We then have the following. 
\begin{thm}{\cite[Thms. 3.4 and 3.5, Rmk. 3.13(b)]{Wil19}} \label{intmotive_thm}
Let $s:S_K \rightarrow \Spec E$ be a PEL Shimura variety of PEL type, of underlying group $G$. Let $V \in \Rep_F(G)$ and suppose that $\tilde{\mu}_F(V)$ belongs to the subcategory $\CHM(S_K)_{F,\partial w \neq 0,1}$ of Def. \ref{avoidweight}. Let $\bar{s} : S_K^* \rightarrow \Spec E$ be the structure morphism from the Baily-Borel compactification of $S_K$. Then 
\begin{enumerate}[wide, labelwidth=!, labelindent=0pt, label=(\arabic*)]
\item the Chow motive $\bar{s}_* j_! \tilde{\mu}(V)$ (see Remark \ref{inters_mot}.\ref{pure_mot}) sits in a canonical factorization in $\DM_c(E)_F$
\begin{center}
\begin{tabular}{c}
\xymatrix{
& \bar{s}_* j_{!*} \tilde{\mu}(V) \ar[dr] & \\
s_! \tilde{\mu}(V) \ar[ur] \ar[rr] & & s_* \tilde{\mu}(V)
}
\end{tabular}
\end{center}
and behaves functorially with respect to both objects $s_! \tilde{\mu}(V)$ and $s_* \tilde{\mu}(V)$;
\item there is a canonical isomorphism between $\mu(V)$-valued interior cohomology of $S_K$ \eqref{intcoh} and $\mu(V)$-valued intersection cohomology of $S_K^*$. For any realization $\rho$, there is a canonical isomorphism  
\[
\rho(\bar{s}_* j_! \tilde{\mu}(V)) \simeq H^{\bullet}_!(S_K, \mu(V))
\]
\end{enumerate}
\end{thm}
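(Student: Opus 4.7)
The plan is to reduce the statement to Wildeshaus' general theory of motivic intermediate extensions, instantiated in the PEL Shimura setting. Since the structure morphism factors as $s = \bar{s} \circ j$ with $\bar{s}$ proper, one has $s_! = \bar{s}_* j_!$ and $s_* = \bar{s}_* j_*$, so that the canonical morphism $s_!\tilde{\mu}(V) \to s_*\tilde{\mu}(V)$ is the pushforward under $\bar{s}_*$ of the canonical morphism $j_!\tilde{\mu}(V) \to j_*\tilde{\mu}(V)$ in $\DM_c(S_K^*)_F$. The problem therefore splits into two: factor the latter through a canonical weight-zero object over $S_K^*$, then push forward to $\Spec E$.

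For the first part, I would exploit that $\tilde{\mu}(V)$ lies in $\CHM(S_K)_F$, hence is pure of weight zero. The localization triangle attached to the pair $(j,i)$ expresses the third term of $j_!\tilde{\mu}(V) \to j_*\tilde{\mu}(V)$ in terms of the boundary motive $i_* \partial\tilde{\mu}(V)$. Under the hypothesis that $\partial\tilde{\mu}(V)$ avoids weights $0$ and $1$, a standard argument using uniqueness of weight truncations in $\DM_c(S_K^*)_F$ produces a canonical factorization
\[
j_!\tilde{\mu}(V) \longrightarrow j_{!*}\tilde{\mu}(V) \longrightarrow j_*\tilde{\mu}(V),
\]
with $j_{!*}\tilde{\mu}(V)$ of weight zero, i.e.\ a Chow motive over $S_K^*$; uniqueness then forces functoriality in $\tilde{\mu}(V)$. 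This is exactly the functor recalled in Remark~\ref{inters_mot}.\ref{ab_mot}, and constitutes the heart of \cite[Thm.~3.4]{Wil19}.

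For the second part, apply $\bar{s}_*$; properness of $\bar{s}$ ensures that $\bar{s}_*$ preserves the heart of the motivic weight structure (as was already invoked in Remark~\ref{inters_mot}.\ref{pure_mot}), so $\bar{s}_* j_{!*} \tilde{\mu}(V) \in \CHM(E)_F$, and the factorization of $s_!\tilde{\mu}(V) \to s_*\tilde{\mu}(V)$ of claim (1), together with its functoriality, is obtained by applying $\bar{s}_*$ to the factorization in $\DM_c(S_K^*)_F$.

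Finally, for claim (2), I would apply any realization $\rho$. Since $\rho$ commutes with the six functors and is compatible with weight structures, and since by Remark~\ref{inters_mot}.\ref{ab_mot} the motive $\tilde{\mu}(V)$ is of abelian type, $j_{!*}\tilde{\mu}(V)$ realizes to the perverse intermediate extension of $\rho(\tilde{\mu}(V))$ (up to the appropriate shift). Consequently $\rho(\bar{s}_* j_{!*} \tilde{\mu}(V))$ computes intersection cohomology of $S_K^*$ with coefficients in $\mu(V)$, which under the weight assumption on $\partial\tilde{\mu}(V)$ is canonically identified with $H^\bullet_!(S_K,\mu(V))$ by the classical argument that purity of $\mu(V)$ forces the image of $H^\bullet_c(S_K,\mu(V)) \to H^\bullet(S_K,\mu(V))$ to coincide with intersection cohomology. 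The main obstacle is the construction and functoriality of the motivic $j_{!*}$ in the second paragraph: it requires a delicate analysis of the weight structure on $\DM_c(S_K^*)_F$ and is the central technical achievement of \cite{Wil19} that I would simply cite; everything else amounts to formal manipulations with the six functors and with realizations.
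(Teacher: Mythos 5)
The paper does not itself prove Theorem~\ref{intmotive_thm}; it is imported verbatim from \cite[Thms.~3.4, 3.5, Rmk.~3.13(b)]{Wil19}, with the surrounding Remark~\ref{inters_mot} serving only to recall the relevant functor $j_{!*}$ and its properties. Your sketch is therefore a reconstruction of the cited argument, and it correctly captures its architecture: factoring $s=\bar{s}\circ j$ to reduce to a statement over $S_K^*$, producing $j_{!*}\tilde{\mu}(V)$ via weight truncation under the weight avoidance hypothesis on $\partial\tilde{\mu}(V)$, observing that $\bar{s}_*$ preserves weight zero, and invoking compatibility of the motivic $j_{!*}$ with the perverse intermediate extension for motives of abelian type. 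One word of caution on the final step: the identification of interior cohomology with intersection cohomology does \emph{not} follow from purity of $\mu(V)$ on the open Shimura variety alone. It is precisely the weight avoidance condition on $\partial\tilde{\mu}(V)$ (which you do invoke) that, upon realization, yields a weight gap at the boundary forcing the image of $H^\bullet_c(S_K,\mu(V))\to H^\bullet(S_K,\mu(V))$ to agree with the intersection cohomology of $S_K^*$; the coefficient system being pure is a necessary hypothesis built into this, not the operative mechanism. So the causal chain in your last sentence is slightly garbled, but the hypotheses you cite are the right ones and the conclusion you draw is correct.
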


Point (2) of the above theorem motivates the following definition. 
\begin{dfi} \label{intmotive_def}
Let $s:S_K \rightarrow \Spec E$ be a PEL Shimura variety of PEL type, of underlying group $G$. Let $V \in \Rep_F(G)$ and suppose that $\tilde{\mu}_F(V)$ belongs to $\CHM(S_K)_{\partial w \neq 0,1}$. The Chow motive $\bar{s}_* j_{!*} \tilde{\mu}(V)$ over $E$ of Theorem \ref{intmotive_thm} is called \emph{interior motive} of $S_K$, with values in $\mu(V)$, and denoted by\footnote{The notation $\Gr_0$ refers to the fact that under the weight avoidance assumption at the boundary, the object $\bar{s}_* j_{!*} \tilde{\mu}(V)$ has to be thought, in a sense made precise by the theory of weight structures, as the \q{lowest weight-graded quotient} of $s_* \tilde{\mu}(V)$. One manifestation of this phenomenon is the following: under our assumptions, the Hodge realization of $\bar{s}_* j_{!*} \tilde{\mu}(V)$ can be identified as well with the lowest weight-graded quotient (in fact, subspace) of the Hodge-theoretic weight filtration on $H^{\bullet}(S_K,\mu(V))$.}
\[
\Gr_0 s_* \tilde{\mu}(V)
\]
\end{dfi}

Our final theorem reads then as follows.
\begin{thm} \label{motheckeaction}
Let $s:S_K \rightarrow \Spec E$ be a PEL Shimura variety, of underlying group $G$. Let $V \in \Rep_F(G)$ and suppose that $\tilde{\mu}_F(V)$ belongs to the subcategory $\CHM(S_K)_{F,\partial w \neq 0,1}$ of Definition \ref{avoidweight}.

Then, there is a canonical action of the motivic Hecke algebra $\mathcal{H}^{M}(G, K)$ of Def. \ref{mothecke} on the interior motive $\Gr_0 s_* \tilde{\mu}(V)$ of Def. \ref{intmotive_def}, realizing to the Hecke algebra action of Remark \ref{heckeintaction} on $\mu(V)$-valued interior cohomology of $S_K$.
\end{thm}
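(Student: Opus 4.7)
The plan is to deduce this theorem from Theorem \ref{realhecke} by exploiting the functoriality of Wildeshaus' interior motive construction encoded in Theorem \ref{intmotive_thm}(1). The strategy falls into three steps, which I outline below.

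First, I would upgrade the motivic Hecke action to a compatible action on $s_! \tilde{\mu}(V)$. Since the morphisms $[\cdot 1]$ and $[\cdot g]$ are finite étale, hence proper, the natural identifications $[\cdot 1]_! \simeq [\cdot 1]_*$ and $[\cdot g]_! \simeq [\cdot g]_*$ let me repeat Definition \ref{mothecke} verbatim with $s_!$ in place of $s_*$, producing endomorphisms $(KgK)_! \in \End_{\DM_c(E)_F} s_! \tilde{\mu}(V)$. Naturality of the adjunction (co)units and proper base change will then give commutative squares
\[
\xymatrix{
s_! \tilde{\mu}(V) \ar[r]^{(KgK)_!} \ar[d] & s_! \tilde{\mu}(V) \ar[d] \\
s_* \tilde{\mu}(V) \ar[r]^{KgK} & s_* \tilde{\mu}(V)
}
\]
for each $g$, which makes precise the assertion of Remark \ref{morehecke}(1) and ensures that the two families of endomorphisms are intertwined by the canonical map $s_! \to s_*$.

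Second, I would invoke the functoriality of the factorization $s_! \tilde{\mu}(V) \to \Gr_0 s_* \tilde{\mu}(V) \to s_* \tilde{\mu}(V)$ given by Theorem \ref{intmotive_thm}(1). Applied to each compatible pair $((KgK)_!, KgK)$, it yields a canonical endomorphism $\overline{KgK}$ of $\Gr_0 s_* \tilde{\mu}(V)$ fitting into the ladder diagram built from the two squares above. To promote $g \mapsto \overline{KgK}$ into an algebra homomorphism
\[
\mathcal{H}^M(G,K) \to \End_{\CHM(E)_F}(\Gr_0 s_* \tilde{\mu}(V)),
\]
I would rely on uniqueness of the lift, which in Wildeshaus' framework comes from Bondarko-type weight vanishings: under the hypothesis $\partial w \neq 0,1$, morphisms between Chow motives of the form $\bar{s}_* j_{!*}(-)$ are determined by their action on either the $s_!$ or the $s_*$ factorization. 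Uniqueness then forces the assignment to respect compositions and $F$-linear combinations automatically.

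Third, the realization statement follows readily. For any realization $\rho$, Theorem \ref{intmotive_thm}(2) identifies $\rho(\Gr_0 s_* \tilde{\mu}(V))$ with $H^{\bullet}_!(S_K, \mu(V))$, while Theorem \ref{realhecke} and its compactly supported counterpart identify $\rho(KgK)$ and $\rho((KgK)_!)$ with the classical Hecke operators $T_{K,g}$ on $H^{\bullet}$ and $H^{\bullet}_c$ respectively. Since these are intertwined by $H^\bullet_c \to H^\bullet$ (Remark \ref{heckeintaction}), $\rho(\overline{KgK})$ must coincide with the classical Hecke action on interior cohomology. The main obstacle, in my view, is the uniqueness argument in the second step: it is what turns a collection of functorial lifts into a \emph{bona fide} algebra action, and it requires unpacking a universal-property characterization of $\Gr_0$ implicit in the weight-avoidance hypothesis rather than merely reading off a factorization one Hecke operator at a time.
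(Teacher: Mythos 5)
Your proposal is correct and follows essentially the same route as the paper, which simply cites the combination of Theorem \ref{realhecke} with the functoriality and realization statements of Theorem \ref{intmotive_thm}. You have usefully unpacked the implicit steps — the $s_!$ variant from Remark \ref{morehecke}(1), the functorial lift to $\Gr_0$, and the weight-theoretic uniqueness that makes the lift multiplicative — but these are precisely the ingredients the paper's one-line proof is appealing to.
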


\begin{proof}
It suffices to combine Theorem \ref{realhecke} with Thm. \ref{intmotive_thm} above. 
\end{proof}

\begin{rem} \label{intersection_motive}
\begin{enumerate}[wide, labelwidth=!, labelindent=0pt, label=(\arabic*)] \item Consider the notations of Theorem \ref{intmotive_thm}. By \cite[Thms. 0.1-0.2]{Wil17}, a Chow motive $j_{!*} \tilde{\mu}(V)$ over $S_K^*$, extending $\tilde{\mu}(V)$ and realizing to the intermediate extension of $\mu(V)$ in the perverse sheaf-theoretic sense, is known to exist for any $V$, even without the weight avoidance hypothesis at the boundary, and to be such that any endomorphism of $s_* \tilde{\mu}(V)$ induces an endomorphism of $\bar{s}_* j_{!*} \tilde{\mu}(V)$. This extension property can be then applied, in particular, to the elements of the motivic Hecke algebra\footnote{In fact, the results of \emph{loc. cit.} apply in far greater generality: one gets relative Chow motives $j_{!*} \CN$, enjoying the above properties, for any direct factor $\CN$ of any motive $h_{S_K}(M_K)$ of the form discussed in Remark \ref{morehecke}.\ref{differences}. One then gets Hecke-type endomorphisms $KgK$ of $\bar{s}_* j_{!*} \CN$ as defined in \emph{op. cit.}, pp. 591-592.} $\mathcal{H}^{M}(G, K)$. However, we do not know whether this provides an \emph{algebra} action of $\mathcal{H}^{M}(G, K)$ on $\bar{s}_* j_{!*} \tilde{\mu}(V)$. The problem comes from the lack of functoriality of the association 
\[
\tilde{\mu}(V) \mapsto j_{!*} \tilde{\mu}(V)
\]
when one does not assume the weight avoidance hypothesis of Theorem \ref{motheckeaction}. 
\item \cite[Cor. 8.8 (b)]{Wil17} gives conditions under which the problem raised by the previous point can be solved. According to \cite[Rmk. 8.9 (a) and (c)]{Wil17}, one may expect these conditions to be met if $V$ is an \emph{irreducible} representation of $G_F$.
\end{enumerate}
\end{rem}

We want to conclude with a list of cases where the weight assumption of Theorem \ref{motheckeaction} is known to hold. The first tautological situation (but still giving rise to interesting Hecke-equivariant Chow motives) is  when $S_K$ is \emph{projective}, for then the weight assumption is empty. In order to give the statements for the non-projective case, choose the field of coefficients $F$ in such a way that the reductive group $G_F$ is \emph{split}, so that we have the theory of highest weights at our disposal. Denote by $V_{\lambda}$ an irreducible representation of $G_F$ of highest weight $\lambda$. Let us then write 
\[
^{\lambda} \CV
\]
for the object $\tilde{\mu}(V_{\lambda})$ of $\CHM(S_K)_F$. In \cite{Wil19}, the following is asked. 

\begin{ques}{\cite[Question 5.13]{Wil19}} \label{reg_question}
Let $S_K$ be non-projective. Suppose the highest weight $\lambda$ to be \emph{regular}. Does the Chow motive $^{\lambda} \CV$ over $S_K$ belong to $\CHM(S_K)_{F,\partial w \neq 0,1}$ ?
\end{ques}

Thanks to the work of Ancona, of the present author, of Cloître, and of Wildeshaus, the following is known (see \cite[Section 5]{Wil19} for details and for the precise references).
\begin{thm} \label{knowncases}
The answer to Question \ref{reg_question} is positive when $S_K$ is associated to the standard PEL Shimura datum having as $G$ one of the following groups\footnote{In order to satisfy the \emph{rational similitude factor} condition in the definition of a rational PEL datum, one actually considers suitable subgroups of the given $G$'s, with same derived subgroup and smaller center.} :
\begin{enumerate}[wide, labelwidth=!, labelindent=0pt, label=(\arabic*)]
\item $\Res_{F \vert \BQ} \GL_{2,F}$, for $F$ a totally real number field (\emph{Hilbert modular varieties}, hence \emph{modular curves} when $F=\BQ$);
\item $\Res_{F^{+} \vert \BQ} \GU(V,J)$, for $F$ a CM field with maximal totally real subfield $F^{+}$, $V$ a 3-dimensional $F$-vector space $V$, and $J$ a $F$-valued hermitian form $J$ on $V$ of signature $(2,1)$ at each archimedean place (\emph{Picard modular varieties}); 
\item $\Res_{F \vert \BQ} \GSp_{4,F}$, for $F$ a totally real number field (\emph{Hilbert-Siegel varieties of genus two}, hence \emph{Siegel threefolds} when $F=\BQ$).
\end{enumerate}
\end{thm}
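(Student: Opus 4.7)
The statement is a compilation of known results, so the plan is essentially a case-by-case reduction to the relevant literature, unified by a common strategy. First I would unpack what the weight avoidance condition means concretely: by Definition \ref{avoidweight}, one needs to show that $\partial\, {}^{\lambda}\CV = i^{*} j_{*}\, {}^{\lambda}\CV$ avoids weights $0$ and $1$ in $\DM_{c}(\partial S_{K}^{*})_{F}$, where $i,j$ are the immersions into the Baily–Borel compactification $S_{K}^{*}$. Since ${}^{\lambda}\CV$ is of abelian type (Remark \ref{inters_mot}.\ref{ab_mot}), the realization of $\partial\, {}^{\lambda}\CV$ is the boundary of the canonical construction; weight avoidance at the level of realizations (which can be read off the Kostant-type description of the cohomology of the boundary strata) is therefore a necessary condition, and the strategy in each case consists of lifting this cohomological statement to the motivic setting via Wildeshaus' formalism of weight structures, using crucially that the strata of the Baily–Borel compactification are themselves (mixed) Shimura varieties for smaller groups, and that the motives involved are of abelian type.

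Next I would go through the three cases in order of increasing difficulty. For Hilbert modular varieties (including modular curves), the boundary strata are zero-dimensional (cusps), and the regularity of $\lambda$ together with the description of the boundary motive via induced representations of the Levi of the rational parabolic attached to each cusp gives the desired weight avoidance; this is essentially the content of Wildeshaus' papers reviewed in \cite[Sect.~5]{Wil19}. For Picard modular varieties the boundary again consists of isolated cusps, and the argument relies on the computation by Cloître of the boundary motive in terms of Kuga–Sato-type abelian schemes over these cusps combined with Ancona's work on the motivic canonical construction; regularity of $\lambda$ ensures that no weight $0$ or $1$ piece survives. For Hilbert–Siegel varieties of genus two the boundary stratification is richer (Klingen and Siegel-type strata), and the argument is the one carried out in my paper \cite{Cav19}: one analyses each stratum in turn, identifies the contribution of ${}^{\lambda}\CV$ to $\partial\, {}^{\lambda}\CV$ via the theory of automorphic vector bundles on mixed Shimura varieties, and checks that regularity of $\lambda$ excludes the weights $0$ and $1$.

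The main obstacle, and the reason the general regular case of Question \ref{reg_question} is still open, is precisely the stratum-by-stratum analysis of the boundary motive: one needs a sufficiently explicit motivic description of $i^{*} j_{*}\, {}^{\lambda}\CV$ on each boundary stratum of $S_{K}^{*}$, together with enough control on the weights of the resulting Chow motives over those strata. For Shimura varieties whose boundary strata are still higher-dimensional Shimura varieties of sufficiently complicated type (e.g.\ Siegel modular varieties of genus $\geq 3$, or higher-rank unitary cases), such a description is not currently available with the needed precision. Thus the plan, in practice, is to invoke the four strands of work cited, each of which handles one of the three listed PEL types with exactly the level of control just described, and to note that no further argument beyond these references is needed since the weight avoidance conclusion matches Definition \ref{avoidweight} verbatim.
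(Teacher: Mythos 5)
Your proposal is correct and takes essentially the same route as the paper. The paper itself gives no argument for Theorem \ref{knowncases} beyond pointing to \cite[Section 5]{Wil19} for the precise references (to work of Wildeshaus, Cloître, Ancona and the author), and your stratum-by-stratum description of what those references do --- computing $\partial\, ^{\lambda}\CV$ on the Baily--Borel boundary, reading off cohomological weight bounds from a Kostant-type description of the boundary strata, and using the abelian-type structure of $^{\lambda}\CV$ to upgrade them to motivic weight avoidance --- accurately reflects their common strategy and the reason the general case remains open.
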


\begin{rem} \label{weights}
In cases (1) and (3) when $F \neq \BQ$, and in case (2) when $F^+ \neq \BQ$, weaker conditions on $\lambda$ than regularity are known to be sufficient for the weight assumption to be satisfied. For example, the notion of \emph{corank}, controlling the existence of sections of automorphic bundles on $S_K$, which restrict non-trivially to strata of $\partial S^*_K$ of a given dimension, arises when characterizing the validity of the weight assumption in case (3) (\cite{Cav19}). Once again, we refer to \cite[Section 5]{Wil19} for a global discussion.

\end{rem}

\begin{cor} \label{motheckeaction_knowncases}
The hypotheses, and hence the conclusion, of Theorem \ref{motheckeaction} hold whenever $S_K$ is one of the Shimura varieties in Theorem \ref{knowncases} and $V=V_{\lambda}$ with $\lambda$ a weight of $G$ as in Remark \ref{weights} (for example, a \emph{regular} weight). 
\end{cor}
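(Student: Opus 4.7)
The statement is a direct combination of the two preceding results, so the plan is essentially to verify that the input hypothesis of Theorem \ref{motheckeaction} is supplied by Theorem \ref{knowncases}, modulo the refinement recalled in Remark \ref{weights}. Concretely, the only thing to check is that for each triple $(G, S_K, V_\lambda)$ appearing in the statement, the smooth Chow motive $\tilde{\mu}(V_\lambda) = {}^\lambda \CV$ belongs to the subcategory $\CHM(S_K)_{F, \partial w \neq 0, 1}$ of Definition \ref{avoidweight}.

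First I would treat the projective case separately as the tautological one, already flagged in the discussion before Question \ref{reg_question}: when $S_K$ is projective, the boundary $\partial S_K^*$ is empty, so the condition $\partial M$ avoids weights $0$ and $1$ is vacuous. Thus the hypothesis of Theorem \ref{motheckeaction} is automatic in that case, for every $V \in \Rep_F(G)$. For the non-projective groups in Theorem \ref{knowncases} (that is, the three families of Hilbert modular, Picard modular, and Hilbert--Siegel examples), I would invoke directly the positive answer to Question \ref{reg_question} recorded in Theorem \ref{knowncases}: for a regular highest weight $\lambda$, the object ${}^\lambda \CV$ lies in $\CHM(S_K)_{F,\partial w \neq 0,1}$. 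Under the weaker hypotheses on $\lambda$ alluded to in Remark \ref{weights} (for instance, the corank conditions in case (3) coming from \cite{Cav19}), the same conclusion holds by the references collected in \cite[Section 5]{Wil19}.

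Once the weight avoidance condition is established, the hypotheses of Theorem \ref{motheckeaction} are met, and the conclusion of that theorem---namely, the existence of a canonical action of the motivic Hecke algebra $\mathcal{H}^M(G,K)$ on the interior motive $\Gr_0 s_* \tilde{\mu}(V_\lambda)$, realizing to the classical Hecke algebra action on interior cohomology---follows immediately. No further argument beyond citation and bookkeeping is required, so there is no serious obstacle; the corollary is really a convenient summary statement whose role is to record explicitly the list of Shimura varieties to which Theorem \ref{thmB} currently applies unconditionally.
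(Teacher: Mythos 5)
Your proof is correct and matches the paper's (implicit) argument: the corollary is stated without a separate proof precisely because, as you say, it is just the observation that Theorem \ref{knowncases} together with Remark \ref{weights} supplies the weight-avoidance hypothesis of Theorem \ref{motheckeaction}, with the projective case being vacuous since $\partial S_K^*$ is empty. No gaps; the bookkeeping is exactly what is needed.
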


\begin{rem} 
Theorem \ref{motheckeaction} was already known when the PEL Shimura variety is a Hilbert modular variety (\cite[Cor. 3.8]{Wil12}), by means of the explicit formulae, provided in \emph{op. cit.}, for the idempotents cutting out the relevant direct factors $^\lambda \CV$ from the motive of a suitable power of $A_K$. 
\end{rem}

\section*{Acknowledgements}
I started getting interested in the problems studied in this paper some time ago, during my PhD thesis (which contains a rough form of the results presented here) at the Université Paris 13, under the direction of J. Wildeshaus. I thank him heartily for introducing me to these subjects and for our exchanges at that time and during the years. I also thank F. Brunault, D. Loeffler, S. Morel and S. Zerbes for interesting questions and exchanges around the content of this article.  Special thanks to G. Ancona for crucial discussions and suggestions around his results on the decomposition of abelian schemes, which are of course the key input for everything done here, and for invaluable, constant encouragement. The almost-final version of this text was completed while I was a member of the Laboratoire de Mathématiques d'Orsay : I acknowledge the excellent working conditions there, as well as the support of the Fondation Mathématique Jacques Hadamard.  

\nocite{}
\bibliographystyle{abbrv}
\bibliography{Library} 


\end{document}